\newcommand{\midarrow}{\tikz \draw[thick,-angle 60] (0,0) -- +(.1,0);}
\newtheorem{Thm}{Theorem}
\newtheorem*{Thm*}{Theorem}
\newtheorem{Prop}[Thm]{Proposition}
\newtheorem{Lemma}[Thm]{Lemma}
\newtheorem{Cor}[Thm]{Corollary}
\crefname{Thm}{Theorem}{Theorems}
\crefname{Lemma}{Lemma}{Lemmas}
\crefname{Prop}{Proposition}{Propositions}
\crefname{Cor}{Corollary}{Corollaries}
\crefname{section}{\S}{\S}
\crefname{figure}{Figure}{Figures}
\newcommand{\eps}{\epsilon}
\renewcommand{\P}{\mathbb P}
\newcommand{\df}{\textbf}
\title{The Phase Transition for Dyadic Tilings}
\date{11 July 2011 (revised 19 July 2012)}
\author[Angel]{Omer Angel}
\address[O.\ Angel]{\tiny Department of Mathematics, University of British
  Columbia, Vancouver, BC V6T 1Z2, Canada}
\email{angel at math.ubc.ca}
\urladdr{\url{http://math.ubc.ca/~angel/}}
\author[Holroyd]{Alexander E.\ Holroyd}
\address[A.\ E.\ Holroyd]{\tiny Microsoft Research,
1 Microsoft Way, Redmond, WA 98052, USA} \email{holroyd
at microsoft.com}
\urladdr{\url{http://research.microsoft.com/~holroyd/}}
\author[Kozma]{Gady Kozma}
\address[G.\ Kozma]{\tiny The Weizmann Institute of Science,
Rehovot POB 76100, Israel} \email{gady.kozma at weizmann.ac.il}
\urladdr{\url{http://www.wisdom.weizmann.ac.il/~gadyk/}}
\author[W\"astlund]{Johan W\"astlund}
\address[J.\ W\"astlund]{\tiny Department of Mathematical Sciences,
Chalmers University of Technology, S-412 96 Gothenburg, Sweden}
 \email{wastlund at chalmers.se}
\urladdr{\url{http://www.math.chalmers.se/~wastlund/}}
\author[Winkler]{Peter Winkler}
\address[P.\ Winkler]{\tiny Department of Mathematics, Dartmouth,
Hanover, NH 03755-3551, USA}
\email{peter.winkler at dartmouth.edu}
\urladdr{\url{http://www.math.dartmouth.edu/~pw/}}
\subjclass[2010]{05B45; 52C20; 60G18}
\keywords{dyadic rectangle, tiling, phase transition, percolation, generating function}
\begin{document}
\begin{abstract}
A dyadic tile of order $n$ is any rectangle obtained from the unit square by
$n$ successive bisections by horizontal or vertical cuts.  Let each dyadic
tile of order $n$ be available with probability $p$, independently of the
others. We prove that for $p$ sufficiently close to $1$, there exists a set
of pairwise disjoint available tiles whose union is the unit square, with
probability tending to $1$ as $n\to\infty$, as conjectured by Joel Spencer in
1999.  In particular we prove that if $p=7/8$, such a tiling exists with
probability at least $1-(3/4)^n$.  The proof involves a surprisingly delicate
counting argument for sets of unavailable tiles that prevent tiling.
\end{abstract}
\maketitle

\section{Introduction} \label{S:intro}

A \df{dyadic tile} of \df{order} $n$ is a rectangle of the form
\[
\Bigl[\frac{a}{2^i}, \frac{a+1}{2^i}\Bigr] \times \Bigl[\frac{b}{2^j},
\frac{b+1}{2^j}\Bigr],
\]
where $a,b,i,j$ are integers and $i+j=n$. We consider only tiles that are
subsets of the unit square $[0,1]^2$, which is to say that $0\leq a <2^i$ and
$0\leq b < 2^j$. The tiles of order $n$ come in $n+1$ different \df{shapes},
each shape corresponding to a particular choice of $i$ and $j$. There are
$2^n$ tiles of each shape, and thus in total $(n+1)\,2^n$ tiles of order $n$.
A \df{tiling} of a rectangle $R$ is a set of tiles whose union is $R$ and
whose interiors are pairwise disjoint. \cref{F:exampleTiling} shows a tiling
of the unit square by tiles of order 3; for visual clarity we illustrate
tiles by rectangles with rounded corners, slightly smaller than their true
sizes.

\begin{figure}
\begin{minipage}{0.49\textwidth}
\centering
\includegraphics[width=4.5cm]{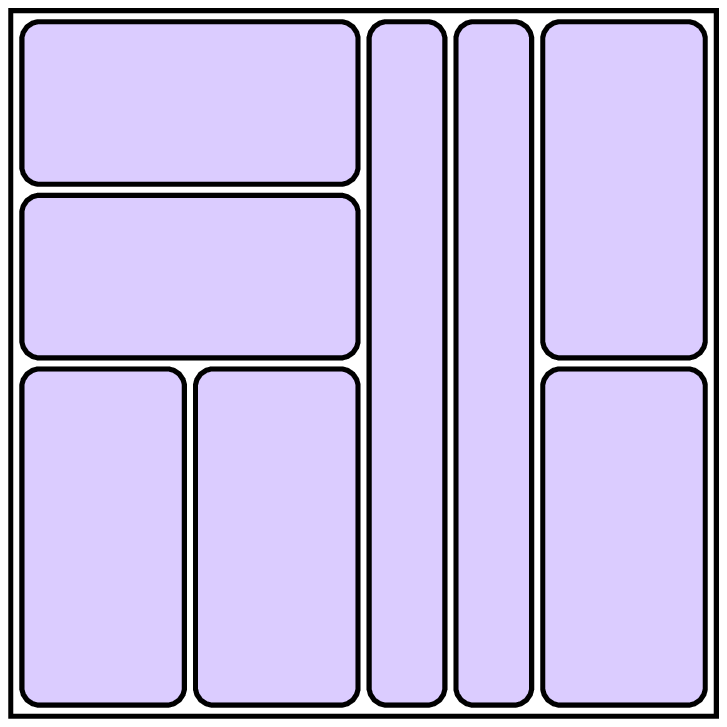}
\captionsetup{width=0.8\textwidth}
\caption{A dyadic tiling by order-3 tiles.}
\label{F:exampleTiling}
\end{minipage}
\hfill
\begin{minipage}{0.49\textwidth}
\centering
\begin{tikzpicture}[domain=0:1, scale=3.5, thick]
\draw[->] (0,0) -- (1.1,0) node[right] {$p$};
\draw[->] (0,0) -- (0,1.1);
\draw[color=blue] plot (\x,\x);
\draw[color=green!50!black, smooth] plot (\x, {2*\x^2-\x^4});
\draw[color=red, smooth] plot
(\x,{7*\x^4-8*\x^6-4*\x^7+\x^8+8*\x^9-4*\x^11+\x^12});
\draw[color=blue] (-0.2, 0.9) node {$T_0(p)$};
\draw[color=green!50!black] (-0.2, 0.75) node {$T_1(p)$};
\draw[color=red] (-0.2, 0.6) node {$T_2(p)$};
\end{tikzpicture}
\captionsetup{width=0.8\textwidth}
\caption{Tiling probabilities $T_n(p)$ for $n=0,1,2$.} \label{F:Tplots}
\end{minipage}
\end{figure}

Suppose that each tile of order $n$ is \df{available} with probability $p$
independently of the other tiles. Let $T_n(p)$ denote the probability that
there exists a set of available order-$n$ tiles that constitutes a tiling of
the unit square $[0,1]^2$. For example $T_0(p) = p$ trivially, and $T_1(p) =
2p^2-p^4$ since each of the vertical and horizontal tilings is available with
probability $p^2$ and both are available with probability $p^4$.  A more
involved calculation shows that $T_2(p) =
7p^4-8p^6-4p^7+p^8+8p^9-4p^{11}+p^{12}$ (the term $7p^4$ corresponds to the
$7$ distinct tilings by order-$2$ tiles).  The functions $T_0,T_1,T_2$ are plotted
in Figure~\ref{F:Tplots}.

It is natural to define the critical probability
\[
p_c:=\inf\big\{p:\lim_{n\to\infty} T_n(p)=1\big\}.
\]
Joel Spencer asked in 1999 whether $p_c<1$ (personal communication). The main
result of this paper is an affirmative answer to this question. In particular
we show the following.

\begin{Thm}\label{T:7/8}
We have
\begin{align*}
T_n(7/8) &\geq 1 - (3/4)^n;\\
T_n(6/7) &\geq 1 - (16/17)^n;\\
T_n(0.8560310279) &\geq 1 - (0.999998)^n.
\end{align*}
In particular, $p_c\leq 0.8560310279$.
\end{Thm}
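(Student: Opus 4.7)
The plan is to proceed by induction on $n$ and obtain an upper bound on $t_n(p) := 1 - T_n(p)$. The key preliminary observation is a structural lemma: any dyadic tiling of a dyadic rectangle that is not the trivial one-tile tiling admits either the horizontal or vertical midline as an interface between tiles. Indeed, a tile crossing the vertical midline must be full-width, a tile crossing the horizontal midline must be full-height, and two such tiles would necessarily overlap. Thus for $n \ge 1$ the event ``tileable'' equals $H \cup V$, where $H$ (resp.\ $V$) is the event that both halves of the horizontal (resp.\ vertical) bisection of the square are tileable by order-$(n-1)$ tiles. Since the top and bottom halves use disjoint tile sets, $\P(H) = \P(V) = T_{n-1}(p)^2$.

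The crucial challenge is to bound $t_n = \P(\bar H \cap \bar V)$ in terms of $t_{n-1}$. A naive FKG inequality yields $\P(\bar H \cap \bar V) \ge \P(\bar H)\P(\bar V)$---the wrong direction, since $\bar H$ and $\bar V$ are positively correlated (they share the tiles within each of the four corner quadrants). I would instead adopt a Peierls-style enumeration of \emph{blockers}: sets of unavailable tiles that intersect every possible order-$n$ tiling. The unit square fails to be tileable iff the set of unavailable tiles contains some minimal blocker, so a union bound gives
\[
t_n(p) \;\le\; \sum_{B} (1-p)^{|B|},
\]
where the sum is over all minimal blockers $B$.

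To bound the sum, I would exploit a recursive structure for minimal blockers coming from the midline lemma. Any minimal blocker must simultaneously block every $H$-tiling (hence contain a sub-blocker for either the top or bottom half) and every $V$-tiling (hence similarly for left or right). Minimality forces $B$ to lie in two specified halves of the square, which meet in one of the four corner quadrants, so $B$ decomposes as a pair of sub-blockers for halves---each an order-$(n-1)$ problem---with a subtle overlap on the shared corner. Introducing a generating polynomial $F_n(x) := \sum_B x^{|B|}$, I would aim for a recursive upper bound on $F_n$ in terms of $F_{n-1}$ (and possibly $F_{n-2}$) that properly accounts for the four ``case'' pairs (top/bottom)$\times$(left/right) and for the shared-corner overlap. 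Setting $x = 1 - p$ and inducting would yield the geometric bounds at $p = 7/8$, $p = 6/7$, and $p \approx 0.8560$.

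The main obstacle will be the detailed combinatorial analysis of minimal blockers and their shared-corner interactions. The same tile in a corner quadrant may be required by both halves' sub-blockers, and minimality then imposes nontrivial compatibility conditions: a crude union bound on the number of blockers of each size gives only weak constants. Obtaining the near-critical bound $T_n(0.8560\ldots) \ge 1 - (0.999998)^n$ in particular requires quite precise accounting of these overlaps, which is exactly the ``surprisingly delicate counting argument'' flagged in the abstract.
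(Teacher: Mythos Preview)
Your high-level strategy (a Peierls-type union bound over blocking configurations, controlled by a generating function satisfying a recursion coming from the midline lemma) is exactly the paper's, but the proposal stops precisely at the point where the real difficulty lies, and the one-variable setup you describe is one the authors explicitly could not make work.

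Two specific gaps:

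\emph{One variable is not enough.} You propose $F_n(x)=\sum_B x^{|B|}$ and hope for a recursion $F_n \leadsto F_{n-1}$. The paper introduces instead a \emph{two}-variable generating function $f_n(q,z)$, with $q$ tracking bonds and $z$ tracking the number of \emph{chains} into which the blocker decomposes (a chain being the set of all order-$k$ tiles containing a fixed higher-order tile). The recursion is $f_{n+1}(q,z)=f_n\bigl(q(1+z),4qz\bigr)$; only after this two-variable substitution does one recover a tractable scalar recursion $f_{n+2}/f_{n+1}=f_{n+1}/f_n+f_{n+1}$ for $a_n:=f_n(q,q)$. The authors state that the second variable ``appears to be not merely a technical requirement but a fundamental one: we do not know how to proceed by counting tiles alone.'' Your $F_n(x)$ is exactly counting tiles alone.

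\emph{Disjointness is not automatic.} You note that the two half-blockers share a corner quadrant and that ``the same tile \ldots\ may be required by both halves' sub-blockers.'' This is the crux. The paper does \emph{not} work with minimal blockers; it constructs blockers by a specific iterative procedure (``chain trees'') and then imposes an additional rule---split a chain at a bond only when the intersection tile is tileable---to obtain a \emph{principal} chain tree. The heart of the argument is a nontrivial lemma (proved by a planar path-crossing argument in a square grid of tiles containing a fixed small tile) that in a principal chain tree all chains are pairwise disjoint. This is what lets the union bound $\sum_T (1-p)^{t(T)}$ go through honestly. Without the chain structure and the principal-tree rule, you have no mechanism to prevent the overlap from deflating the exponent $|B|$ below what the recursion predicts, and the ``crude union bound'' you warn about is all that remains.

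In short: right framework, but the two ingredients that make it converge---organizing blockers into chains tracked by a second generating-function variable, and the split-only-when-forced rule guaranteeing disjointness---are absent from the proposal.
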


We explain in \cref{analysis} where these numbers come from.   The first
inequality can be checked by hand, while the second and third involve
rigorous computer-assisted numerical methods.  The bound $p_c\leq
0.856\cdots$ is the best that can be obtained with our method, but we believe
that $p_c$ is smaller than this. Using standard sharp threshold technology,
we also establish the following.

\begin{Thm}\label{T:sharp}With $p_c$ defined as above,
\[
\lim_{n\to\infty} T_n(p) = \begin{cases} 0 \quad \text{if $p<p_c$,}\\ 1
\quad \text{if $p>p_c$}. \end{cases}
\]
\end{Thm}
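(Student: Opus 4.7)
The plan is to establish the two inclusions separately. The direction $p>p_c \Rightarrow T_n(p)\to 1$ is essentially free from the definition. If $p>p_c$ then there is some $p'\in(p_c,p)$ with $T_n(p')\to 1$; the event ``the available order-$n$ tiles contain a tiling'' is increasing in the availability variables, so by a standard coupling $T_n(p)\ge T_n(p')\to 1$.

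The substantive direction $p<p_c \Rightarrow T_n(p)\to 0$ will be derived from a sharp-threshold theorem. The statement to prove is equivalent to: if $\liminf_n T_n(p)>0$ for some $p$, then $T_n(p')\to 1$ for every $p'>p$ (which then forces $p'\ge p_c$ for every such $p'$, hence $p\ge p_c$). The tileability event $A_n$ is an increasing event on the $N_n=(n+1)2^n$ independent Bernoulli variables indexed by the order-$n$ tiles, so the Margulis--Russo framework applies and all that is needed is enough symmetry to control the total influence.

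The relevant symmetry group is the group $G_n$ generated by the involutions that, for any dyadic sub-rectangle $R\subseteq[0,1]^2$, swap the two halves of $R$ across its bisecting segment. These maps permute the set of order-$n$ tiles and preserve $A_n$. The orbits of $G_n$ are precisely the shape classes, so there are $n+1$ orbits, each of size $2^n=N_n/(n+1)$. Because these orbit sizes are only a logarithmic factor below $N_n$, the Friedgut--Kalai / Bourgain--Kalai sharp-threshold theorem applies and yields a constant $C$ such that
\[
T_n(p)\ge \eps \;\;\Longrightarrow\;\; T_n\!\left(p+\frac{C\log(1/\eps)}{n}\right)\ge 1-\eps
\]
for all $\eps\in(0,1/2)$ and all sufficiently large $n$.

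To close the argument, suppose $\liminf_n T_n(p)=\delta>0$ and fix $p'>p$. For any $\eps\in(0,\delta)$, we have $T_n(p)\ge \eps$ for all large $n$, and since $C\log(1/\eps)/n\to 0$, eventually $p+C\log(1/\eps)/n<p'$; by monotonicity in $p$ and the displayed inequality, $T_n(p')\ge 1-\eps$ for large $n$. Hence $\liminf_n T_n(p')\ge 1-\eps$, and letting $\eps\downarrow 0$ gives $T_n(p')\to 1$, as required. The main point of care is the invocation of the sharp-threshold theorem: one must check that the group $G_n$ has the required invariance property for the variant being used (transitive-on-orbits, with orbit size $\gtrsim N_n/\log N_n$), but this follows routinely from the construction of $G_n$.
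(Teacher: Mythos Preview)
Your approach is the same as the paper's---invoke the Friedgut--Kalai sharp-threshold theorem using the symmetry group that acts transitively on each shape class (orbits of size $2^n$)---but there is a genuine gap in the logic. The claimed ``equivalence'' is not an equivalence: the contrapositive of ``$p<p_c\Rightarrow T_n(p)\to 0$'' is ``$\limsup_n T_n(p)>0\Rightarrow p\ge p_c$'', whereas you prove only ``$\liminf_n T_n(p)>0\Rightarrow p\ge p_c$''. Taking the contrapositive of what you actually established gives merely $\liminf_n T_n(p)=0$ for $p<p_c$, not $\lim_n T_n(p)=0$. Nothing in a pure sharp-threshold argument rules out, say, $T_n(p)$ oscillating between values near $0$ and values near~$1$ along different subsequences.

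The missing ingredient is exactly the one the paper supplies via Corollary~\ref{C:leftSharpness}: from $T_{n+1}(p)\le 2T_n(p)^2$ (a direct consequence of Lemma~\ref{L:JRS}) one gets that if $T_n(p)<\tfrac12$ for \emph{some} $n$, then $T_m(p)\to 0$. Since your argument already yields $\liminf_n T_n(p)=0<\tfrac12$, a single application of this recursion closes the gap immediately. So the fix is short, but you must invoke the recursive structure of dyadic tilings somewhere; the Friedgut--Kalai theorem alone cannot give the full limit statement.
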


A straightforward argument proves the lower bound $p_c\geq (\surd5{-}1)/2$
$=0.618\cdots$, and in fact this may be improved to
$$p_c\geq 0.785996.$$
It is likely that this bound could be improved still
further---see \cref{prelim} for more information.

In $d$ dimensions we may similarly define an order-$n$ dyadic tile to be a
product of $d$ dyadic intervals.  Let each dyadic tile of volume $2^{-n}$ be
available independently with probability $p$, and let $p_c(d)$ be the infimum
of $p$ for which there is a tiling of the cube $[0,1]^d$ with probability
tending to $1$ as $n\to\infty$.  It is immediate that $p_c(d)$ is
non-increasing in $d$ (since the product of any $d$-dimensional tiling with
$[0,1]$ is a $(d+1)$-dimensional tiling), so \cref{T:7/8} implies $p_c(d)<1$
for all $d\geq 2$.  In dimension $3$ a simple argument gives the lower bound
$p_c(3)\geq 1/8$ (see \cref{prelim}), but for $d\geq 4$ we do not know
whether $p_c(d)>0$.

A different model, of uniformly random dyadic tilings, was investigated by
Janson, Randall and Spencer \cite{J01, JRS02}.  Also see \cite{LSV02} for
enumeration of tilings, and \cite{CLSW01} for a related problem of random
packing.

\section{Preliminaries, and outline of proof}
\label{prelim}

The following key observation is Theorem~1.1 of \cite{JRS02}.

\begin{Lemma}\label{L:JRS}
A dyadic tiling by tiles of order $n\geq 1$ consists either of tilings of the two
horizontal rectangles $[0,1]\times [0,1/2]$ and $[0,1]\times [1/2,1]$, or
tilings of the two vertical rectangles $[0,1/2]\times [0,1]$ and
$[1/2,1]\times [0,1]$.
\end{Lemma}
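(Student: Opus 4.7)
The plan is to show that in any dyadic tiling of $[0,1]^2$ by order-$n$ tiles with $n\ge 1$, either the horizontal midline $y=1/2$ or the vertical midline $x=1/2$ is covered entirely by tile boundaries; granted this, the tiling decomposes along that midline into two sub-tilings of the indicated rectangles. I would do this by analysing which individual tiles can meet a midline through their interior, and then checking that the two obstructions to splitting (one for each midline) cannot occur simultaneously.

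First I would record the following dyadic observation: if a tile's vertical range $[b/2^j,(b+1)/2^j]$ contains $1/2$ in its interior, then $j=0$. Indeed, for $j\ge 1$ both endpoints are integer multiples of $1/2^j$, and so is $1/2 = 2^{j-1}/2^j$, which forces $1/2$ to coincide with an endpoint rather than lie in the interior. Consequently any tile of the tiling whose interior is crossed by $y=1/2$ must have $j=0$, and hence occupies a full vertical strip $[a/2^n,(a+1)/2^n]\times[0,1]$. By the symmetric argument, any tile whose interior is crossed by $x=1/2$ has $i=0$ and occupies a full horizontal strip $[0,1]\times[b/2^n,(b+1)/2^n]$.

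Given a dyadic tiling, two cases arise. If no tile has $j=0$, then every tile lies in one of the two horizontal halves of the square, so the tiling restricts to tilings of $[0,1]\times[0,1/2]$ and $[0,1]\times[1/2,1]$, as required. Symmetrically, if no tile has $i=0$, we obtain the vertical decomposition. The only remaining possibility is that the tiling contains both a vertical strip tile (with $j=0$) and a horizontal strip tile (with $i=0$); but any such pair necessarily overlaps in a full dyadic sub-square of positive area, contradicting the requirement that tiles in a tiling have disjoint interiors.

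I do not expect a substantive obstacle here. The only subtlety is the elementary dyadic observation in the second paragraph, and the rest is a clean case split closed off by a trivial area argument. The key role of the hypothesis $n\ge 1$ is that it guarantees at least one of $i,j$ is positive for every tile, so that whenever the decomposition fails, the two forbidden strip-tiles are genuinely present to be ruled out.
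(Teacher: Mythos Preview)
Your proof is correct and follows essentially the same approach as the paper's: identify that only the full-height (resp.\ full-width) strip tiles can have interiors meeting the horizontal (resp.\ vertical) midline, and then note that a strip of each type cannot coexist in a tiling because their interiors overlap. The paper's proof is simply a terser statement of the same argument; your explicit case split and the dyadic observation that $1/2$ is always a grid point when $j\ge 1$ spell out details the paper leaves implicit.
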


\begin{proof}
  The only tiles that cross the median line $\{1/2\}\times[0,1]$ are those of the
  most horizontal shape, i.e.\ of the form $[0,1] \times [b/2^{n}, (b+1)/2^{n}]$.
  Similarly the only tiles that cross $[0,1]\times\{1/2\}$ are of the most
  vertical shape. There cannot be tiles of both these shapes in a tiling,
  since they intersect (see \cref{F:intersect}).
\end{proof}

We remark that the analogous statement to \cref{L:JRS} fails in dimensions
greater than $2$: see \cref{F:3d} for a counterexample in dimension $3$.

\begin{figure}
\begin{center}
\includegraphics[width=3.3cm]{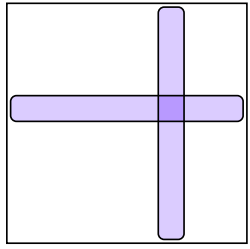}
\end{center}
\caption{A horizontal and a vertical tile intersect, and therefore
cannot both be present in a tiling.
}
\label{F:intersect}
\end{figure}
\begin{figure}
\begin{center}
\begin{tikzpicture} [scale=0.6,thick]
\draw (0,2) -- (2,0) -- (6,0) -- (6,4) -- (4,6) -- (0,6) -- cycle;
\draw (2,2) -- (4,2) -- (4,4) -- (3,5) -- (1,5) -- (1,3) -- cycle;
\draw (1,1) -- (1,3) -- (3,3);
\draw (2,6) -- (3,5) -- (3,3);
\draw (6,2) -- (4,2) -- (3,3);
\draw (2,0) -- (2,2);
\draw (0,6) -- (1,5);
\draw (6,4) -- (4,4);
\draw (1.5,0.5) -- (1.5,2.5) -- (3.5,2.5);
\draw (2,3) -- (2,5) -- (1,6);
\draw (3,4) -- (4,3) -- (6,3);
\end{tikzpicture}
\end{center}
\caption{A dyadic tiling of the unit cube in which no half of the cube is
  tiled. Six tiles of dimensions $1\times\tfrac12\times\tfrac14$ (with
  various orientations) are shown, and the remaining space is filled by two
  $\tfrac12\times\tfrac12\times\tfrac12$ cubes.}
\label{F:3d}
\end{figure}
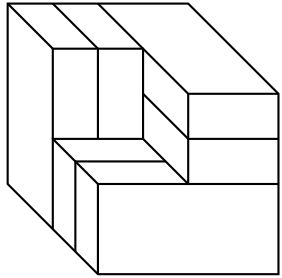

\begin{Cor}\label{triv}
We have  $T_{n+1}(p) \leq 2T_n(p)^2$.
\end{Cor}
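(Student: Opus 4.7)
The plan is to apply \cref{L:JRS} directly, turning the dichotomy it provides into a union bound, and then using independence of disjoint tile sets to factor the probability of tiling each half.

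First, I would invoke \cref{L:JRS}: any tiling by order-$(n+1)$ tiles must consist of tilings of the two horizontal halves $H_1 = [0,1]\times[0,1/2]$ and $H_2 = [0,1]\times[1/2,1]$, or tilings of the two vertical halves $V_1 = [0,1/2]\times[0,1]$ and $V_2 = [1/2,1]\times[0,1]$. Letting $A_H$ (respectively $A_V$) be the event that both $H_1$ and $H_2$ (resp.\ $V_1$ and $V_2$) are tiled by available order-$(n+1)$ tiles, this gives $T_{n+1}(p) = \P(A_H \cup A_V) \leq \P(A_H) + \P(A_V)$.

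Next, observe that each half-rectangle $H_i$ (resp.\ $V_i$) is tiled by order-$(n+1)$ tiles contained in it, and under the obvious affine rescaling (scaling by $2$ in the short direction) this is equivalent to a tiling of the unit square by order-$n$ tiles. Since the order-$(n+1)$ tiles contained in $H_1$ are disjoint (as a set) from those contained in $H_2$, and their availabilities are independent, the events that $H_1$ and $H_2$ can each be tiled are independent, each with probability $T_n(p)$. Hence $\P(A_H) = T_n(p)^2$, and the same reasoning gives $\P(A_V) = T_n(p)^2$. Combining yields $T_{n+1}(p) \leq 2 T_n(p)^2$, as desired.

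There is no real obstacle here; the only point that needs a moment of care is verifying that the tiles used for the two halves genuinely come from disjoint subsets of the order-$(n+1)$ tile set, so that the two tilings are probabilistically independent. This is immediate since any order-$(n+1)$ tile lies on one side or the other of the bisecting median (the tiles crossing the median are ruled out by the dichotomy in \cref{L:JRS}).
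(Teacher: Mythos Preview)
Your proof is correct and follows essentially the same approach as the paper's: invoke \cref{L:JRS}, use the affine rescaling to identify each half-rectangle tiling with an order-$n$ tiling of the square, and combine via a union bound. You are slightly more explicit than the paper about the independence of the two halves (coming from disjoint tile sets), but the argument is the same.
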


\begin{proof}
A tiling of a rectangle such as $[0,1]\times [0,1/2]$ by order $n+1$ tiles is
isomorphic under an obvious affine transformation to a tiling of the unit
square by order-$n$ tiles. Therefore the probability that the two horizontal
rectangles $[0,1]\times [0,1/2]$ and $[0,1]\times [1/2,1]$ can both be tiled
by available order-$(n+1)$ tiles is $T_n(p)^2$, and similarly for the two
vertical rectangles $[0,1/2]\times [0,1]$ and $[1/2,1]\times [0,1]$. The
inequality now follows from \cref{L:JRS}.
\end{proof}

\begin{Cor}\label{C:leftSharpness}
For any given $p$, if there is an $n$ such that $T_n(p)<1/2$, then
$\lim_{n\to\infty}T_n(p) = 0$.
\end{Cor}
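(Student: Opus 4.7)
The plan is to iterate the recursive inequality from \cref{triv}. For fixed $p$, write $t_n := T_n(p)$, so that \cref{triv} gives $t_{n+1} \le 2t_n^2$. The natural way to exploit a superquadratic recursion is to absorb the constant $2$ into the variable: set $y_n := 2 t_n$. Then $y_{n+1} = 2 t_{n+1} \le 4 t_n^2 = y_n^2$, so $y_n$ satisfies the clean recursion $y_{n+1} \le y_n^2$.

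By induction on $k \ge 0$, this yields $y_{n+k} \le y_n^{2^k}$ for every $n$. If the hypothesis $t_n < 1/2$ holds for some particular $n$, then $y_n < 1$, and hence $y_{n+k} \le y_n^{2^k} \to 0$ as $k \to \infty$. Translating back, $t_{n+k} \to 0$, which is exactly $\lim_{m\to\infty} T_m(p) = 0$.

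There is no real obstacle here; the statement is a direct consequence of \cref{triv} together with the elementary observation that the map $x \mapsto 2x^2$ has $1/2$ as an unstable fixed point and is contracting toward $0$ on $[0,1/2)$, with the doubly-exponential rate $y_n^{2^k}$. The only minor bookkeeping point to mention is that $T_n(p)$ is of course non-negative, so all quantities above are non-negative and the induction goes through without sign issues.
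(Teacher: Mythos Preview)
Your proof is correct and is essentially the same as the paper's: the paper simply says the result is immediate from \cref{triv}, and you have written out the standard iteration $y_{n+1}\le y_n^2$ with $y_n=2T_n(p)$ that makes this immediacy explicit.
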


\begin{proof}This is immediate from \cref{triv}.\end{proof}

We remark that the threshold $1/2$ in Corollary~\ref{C:leftSharpness} can be
improved (although this will not be needed). Consider the event
$T^\textrm{vert}$ that there is a tiling by available order-$n$ tiles of both
vertical halves of the unit square, and the event $T^\textrm{horiz}$ that
there is a tiling of both horizontal halves. Since $T^\textrm{vert}$ and
$T^\textrm{horiz}$ are increasing events, by the Harris-FKG inequality
\cite{H60},
\[
\P(T^\textrm{horiz}\cap T^\textrm{vert}) \ge
\P(T^\textrm{horiz})\,\P(T^\textrm{vert}) = T_n(p)^4.
\]
Thus we get the inequality $T_{n+1}(p) \leq 2T_n(p)^2 - T_n(p)^4$. It follows
that if $T_n(p) < (\surd5-1)/2$ then $\lim_{n\to\infty}T_n(p) = 0$.

Since $T_0(p)=p$, \cref{C:leftSharpness} immediately implies $p_c\geq 1/2$,
and the enhancement described above gives $p_c\geq (\surd5-1)/2$.

Corollary~\ref{C:leftSharpness} also implies that $T_n(p_c)\ge 1/2$ for all
$n$ (and in fact, $1/2$ may be replaced with $(\surd5-1)/2$). Indeed, if
$T_n(p_c)<1/2$ then, by the continuity of $T_n$ (which is a polynomial), for
some $p>p_c$ we would also have $T_n(p)<1/2$. Corollary~\ref{C:leftSharpness}
then shows that $T_n(p)\to 0$, in contradiction to $p>p_c$.

\subsection*{Covering and lower bounds}

Next we briefly discuss covering.  This basic concept will not be needed for
our proof of Theorem~\ref{T:7/8}, but it motivates parts of the proof and
also yields improved lower bounds on $p_c$.  If some point $x\in [0,1]^2$ is
not covered by any available tile, then clearly there is no tiling by
available tiles. Each of the the $4^n$ squares of size $2^{-n}\times 2^{-n}$
is uncovered with probability $(1-p)^{n+1}$, and it follows that for $p>3/4$
there are no uncovered points with high probability as $n\to\infty$. A
standard second-moment argument shows furthermore that for $p<3/4$ there are
uncovered points with high probability, implying $p_c\geq3/4$.

The absence of uncovered points is necessary but not sufficient
for tiling; see Figure~\ref{F:covtil} for an example. Therefore
the above argument cannot yield an upper bound on $p_c$. In
fact, it may be shown that $p_c$ (the critical point for
tiling) is strictly greater than $3/4$ (the critical point for
covering), as follows. Let $n\geq 1$.  A \df{friend} of an
order-$n$ tile $s$ is an order-$n$ tile $t$ such that $s\cup t$
is an order-$(n-1)$ tile. Observe that in any dyadic tiling,
every tile has some friend also present in the tiling. Call a
$2^{-n}\times 2^{-n}$ square \df{bad} if every available tile
that covers it has no available friend.  Thus a bad square
prevents tiling.  Each square is bad with probability
$[1-p+p(1-p)]^2 [1-p+p(1-p)^2]^{n-1}$, and a second-moment
argument again goes through to show that if $1-p+p(1-p)^2>1/4$
then bad squares exist with high probability as $n\to\infty$.
This gives $p_c\geq 0.785996$.  It seems likely that the bound
could be further improved by considering more complicated local
obstacles to tiling.

\begin{figure}
\begin{center}
\includegraphics[width=4.2cm]{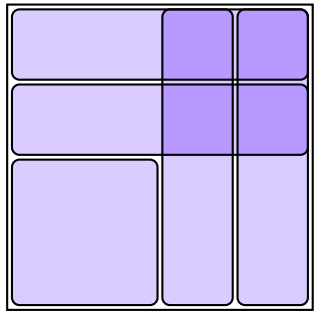}
\end{center}
\caption{A configuration that covers but does not tile.}
\label{F:covtil}
\end{figure}

Before moving on to discuss the proof of \cref{T:7/8} we explain the claimed
lower bound on $p_c(3)$ in dimension $3$.  We cannot use covering: for $d\geq
3$, every point of $[0,1]^d$ is covered with high probability for {\em every}
$p>0$ (by a first-moment argument).  However, in a tiling of the cube
$[0,1]^3$ by order-$n$ tiles, intersecting each tile with a fixed face of the
cube yields a tiling of the face by $2$-dimensional tiles of orders $n$
\textit{or lower}, and furthermore any such $2$-dimensional tile can arise
from exactly one possible $3$-dimensional order-$n$ tile. Let $L_n(p)$ be the
probability that the square $[0,1]^2$ is tiled by available tiles when each
tile of order $n$ or lower is available independently with probability $p$.
Thus the $3$-dimensional tiling probability $T_n^{(3)}(p)$ is at most
$L_n(p)$. Similarly to \cref{triv}, we obtain
$$L_{n+1}(p) \leq p+2L_n(p)^2,$$
and it follows that if $p<1/8$ then $\limsup_{n\to\infty} L_n(p)<1$.  Thus
$p_c(3)\geq 1/8$.  (Again, this bound could likely be improved).

\subsection*{Outline of proof}

We next describe the main ideas behind the proof of Theorem~\ref{T:7/8}. The
basic strategy is simple and standard: we show that if $[0,1]^2$ is not
tiled, then a certain combinatorial structure of unavailable tiles must
exist; by counting such structures (weighted according to the number of
unavailable tiles) we then show that for $p$ sufficiently close to $1$ their
expected number is small.  The challenge, of course, is to find a suitable
class of combinatorial structures.

As discussed above, uncovered points (or similar structures) are not suitable
for our purpose, because their absence does not imply existence of a tiling.
Instead we proceed as follows.  If the unit square is not tileable by
available tiles, then by Lemma \ref{L:JRS}, one of the two horizontal halves
and one of the two vertical halves must also be not tileable; see e.g.\
Figure \ref{F:blocking}(a).  We can then iterate: each of the two
non-tileable halves must itself have two non-tileable halves, and so on until
we reach some ``blocking set'' of unavailable tiles of order $n$, whose
unavailability is sufficient to prevent a tiling of the square. For example,
Figure~\ref{F:blocking}(b) shows one possibility at order $2$.

If at every stage of the above procedure all the resulting non-tileable tiles
were distinct, then the proof would be straightforward: the number of
unavailable tiles in the final blocking set would be $2^n$, and the number of
possible blocking sets would be at most $4^{2^n-1}$ (there are $4$ choices
for the pairs of halves of each tile), and $4^{2^n-1} (1-p)^{2^n}$ is small
for $p$ sufficiently close to $1$.

However, the tiles resulting from the above iterative procedure are {\em not}
necessarily distinct. Even at order $2$, there is a blocking set of $3$ (as
opposed to $4$) tiles; see Figure~\ref{F:blocking}(c). A blocking set with
fewer tiles signals a potential difficulty, since the probability that they
are all unavailable is larger. However, the number of possible outcomes with
fewer tiles may also be smaller. In particular, the {\em minimum} number of
unavailable tiles of order $n$ needed to prevent tiling the unit square is
$n+1$, and in fact the sets of $n+1$ tiles that achieve this are precisely
those whose mutual intersection is some $2^{-n}\times 2^{-n}$ square.
Therefore the number of such minimum-size blocking sets is only $4^n$, and
ruling them out for large $p$ simply amounts to the earlier ``covering''
calculation.

\begin{figure}
\centering

\subfigure[]{
\includegraphics[width=3.2cm]{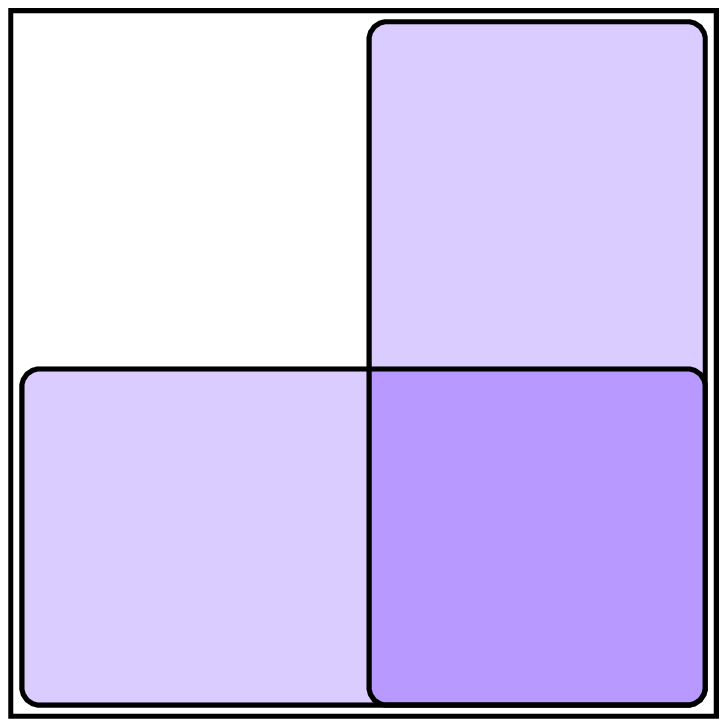}
}
\subfigure[]{
\includegraphics[width=3.2cm]{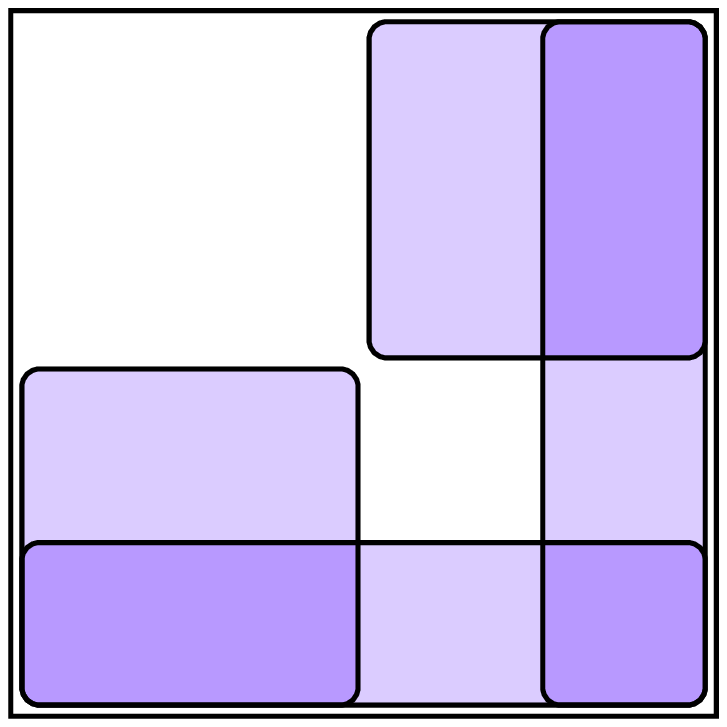}
}
\subfigure[]{
\includegraphics[width=3.2cm]{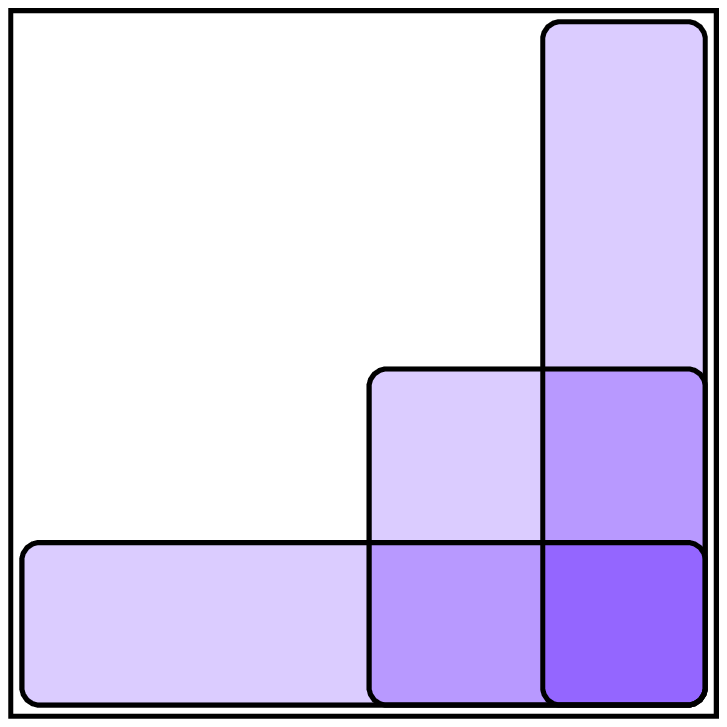}
}

\caption{(a) Two order-$1$ tiles that together block the unit square. (b)
Four order-$2$ tiles that block the tiles in (a). (c) Three order-$2$ tiles
that block the tiles in (a).} \label{F:blocking}
\end{figure}

The issue now is that there are many intermediate blocking sets with numbers
of tiles between $n+1$ and $2^n$.  We must analyze these possibilities taking
into account both the number of choices and the resulting numbers of tiles
(and these two quantities must be weighed against each other).  To achieve
this, we will organize blocking sets into {\em chains}.  A chain is the set
of all tiles of a given order that contain some fixed tile of some higher
order.  (Equivalently, it is one of the minimum blocking sets discussed
above, but within some tile of intermediate order rather than the whole
square). For example, the set in Figure \ref{F:blocking}(c) is a chain of 3
tiles, while that in (b) can be expressed as a union of 2 chains each
consisting of 2 tiles. Although our eventual interest lies in the cardinality
of the blocking set resulting from the iterative procedure, we will count the
possible outcomes using a generating function of {\em two} parameters,
corresponding to numbers of tiles and numbers of chains.  The resulting
counting argument is short but somewhat mysterious.  The inclusion of both
parameters appears to be not merely a technical requirement but a fundamental
one: we do not know how to proceed by counting tiles alone.

Another complication is as follows.  In the iterative procedure for finding
blocking sets outlined above, several choices may be possible.  It is
possible for example that {\em both} horizontal halves of a tile are
non-tileable, and we must choose one of them.  It turns out that how we do
this is crucial.  We will impose the rule that we always make the choice that
minimizes the resulting number of chains.  For example, starting from the
situation of Figure \ref{F:blocking}(a), we choose the blocking set in (c) in
preference to the one in (b).  (We might however be forced to take the one in
(b) if the bottom-right $\tfrac12\times\tfrac12$ square is tileable). With
this rule, it will turn out that the collection of chains produced by the
iterative procedure is pairwise disjoint.  Without it, chains could intersect
(or indeed coincide); this would result in a reduction in the number of tiles
in the blocking set, again adversely affecting the resulting bound on the
probability that they are all unavailable.  Combined with the counting
argument mentioned earlier, this disjointness of chains suffices to give a
bound of the required form.

\subsection*{Organization}  The article is organized as
follows.  In \cref{sec:sharp} below we prove the sharp threshold result,
\cref{T:sharp}.  The remainder of the article is devoted to the proof of
\cref{T:7/8}.  In \cref{S:TilesAndChains} we introduce {\em chains} of tiles,
and prove some properties.  In \cref{S:chaintrees} we arrange chains into
{\em chain trees}.   These are the ``blocking configurations'' at the heart
of the proof: if the unit square is not tiled, then there is a blocked chain
tree, and we can bound the probability of this event by counting chain trees.
In \cref{S:principal} we prove that non-tileability implies the existence of
a chain tree of a special type which is guaranteed to have all its chains
disjoint, as discussed above.  Finally, in \cref{analysis} we employ
generating functions to perform the necessary counting argument.   We
conclude with some open problems.

%%%%%%%%%%%%%%%%%%%%%%%%%%%%%%%%%%%%%%%%%%%%%%%%%%%%%%%%%%%%%%%%%%
\section{Sharp threshold}\label{sec:sharp}

We will deduce \cref{T:sharp} from \cref{C:leftSharpness} together with the
following result of Friedgut and Kalai. In their paper \cite{FK96} this is
Theorem~2.1, modified according to the comment after Corollary~3.5. Here $A$
is a subset of the hypercube $\{0,1\}^N$, endowed with the product
probability measures $\P_p$.

\begin{Thm*}[Friedgut and Kalai; \cite{FK96}]\label{T:FK}
Let $A$ be increasing and invariant under the action on $\{1,\ldots,N\}$ of a
group with orbits of size at least $m$. If $\P_p(A) > \eps$ then $\P_q(A) >
1-\eps$ for $q = p + c \log(1/2\eps) / \log m$ and an absolute constant $c$.
\end{Thm*}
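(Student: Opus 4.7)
The plan is to derive the bound from two classical analytic inputs---the Margulis--Russo differentiation formula and a Kahn--Kalai--Linial/Talagrand style lower bound on the total influence of a monotone event in terms of the maximum coordinate influence---with the group symmetry entering through the elementary observation that influence is constant on orbits.

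First I would set up influences for the $p$-biased product measure $\P_p$ on $\{0,1\}^N$: for each coordinate $i$, let $\mathrm{Inf}_i^p(A)$ denote the $\P_p$-probability that $i$ is pivotal for $A$, i.e.\ that flipping $x_i$ changes whether $x\in A$. The Margulis--Russo formula then reads
\[
\frac{d}{dp}\,\P_p(A) \;=\; \sum_{i=1}^N \mathrm{Inf}_i^p(A),
\]
and the Talagrand/BKKKL inequality for biased product measures supplies an absolute constant $C>0$ with
\[
\sum_{i=1}^N \mathrm{Inf}_i^p(A) \;\geq\; C\,\P_p(A)\bigl(1-\P_p(A)\bigr)\,\log\frac{1}{\max_i \mathrm{Inf}_i^p(A)}
\]
for every monotone $A$ and every $p$ bounded away from $0$ and $1$.

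Next I would exploit the symmetry: since each $g\in G$ acts on $\{0,1\}^N$ by a $\P_p$-preserving bijection that leaves $A$ invariant, the influences are constant on the $G$-orbits in $\{1,\ldots,N\}$. Because every orbit has cardinality at least $m$, this forces $\max_i \mathrm{Inf}_i^p(A) \leq \tfrac{1}{m}\sum_i \mathrm{Inf}_i^p(A)$. Writing $f(p):=\P_p(A)$ and $S(p):=f'(p)$, the two displays combine into
\[
f'(p) \;\geq\; C\,f(p)\bigl(1-f(p)\bigr)\,\log\frac{m}{f'(p)}.
\]
A short case split on whether $\max_i \mathrm{Inf}_i^p(A)$ lies above or below $1/\sqrt m$ eliminates the $f'(p)$ on the right-hand side (in the regime $\max > 1/\sqrt m$ one already has $f'(p)>\sqrt m$, which dominates $f(1-f)\log m$ for large $m$), leaving the clean differential inequality
\[
f'(p) \;\geq\; c\,f(p)\bigl(1-f(p)\bigr)\log m.
\]

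To finish, substitute $L(f):=\log\bigl(f/(1-f)\bigr)$, whose derivative is $1/(f(1-f))$, converting the inequality into $\tfrac{d}{dp}L(f(p)) \geq c\log m$. If $p<q$ satisfied $f(p)>\eps$ and $f(q)\leq 1-\eps$, the resulting bounds $L(f(q))-L(f(p)) \leq 2\log((1-\eps)/\eps)$ and $L(f(q))-L(f(p)) \geq c(q-p)\log m$ would give $q-p \leq c'\log(1/(2\eps))/\log m$, which is exactly the contrapositive of the stated conclusion. The main obstacle is the Talagrand/BKKKL input itself: one must invoke a version valid for the $p$-biased measure with constants uniform on the $p$-window relevant to the threshold. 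This is nontrivial but classical, and in the Friedgut--Kalai paper it is essentially used as a black box. Beyond this input the argument reduces to differentiation, symmetrization, and an elementary ODE.
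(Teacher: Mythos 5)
Note first that the paper does \emph{not} prove this statement: it is quoted verbatim as a known result, attributed to Friedgut and Kalai \cite{FK96} (``Theorem~2.1, modified according to the comment after Corollary~3.5''), and used as a black box to derive \cref{T:sharp}. So there is no ``paper's own proof'' to compare against; what you have reconstructed is, in effect, the argument from \cite{FK96} itself.

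That reconstruction is faithful to the original approach and has no essential gaps. The chain Russo's formula $\Rightarrow$ KKL/BKKKL-type lower bound on total influence in terms of the maximum influence $\Rightarrow$ symmetry forces influences to be constant on orbits, hence $\max_i \mathrm{Inf}_i^p(A)\le \tfrac1m\sum_i\mathrm{Inf}_i^p(A)$ $\Rightarrow$ logistic differential inequality $\tfrac{d}{dp}\log\bigl(f/(1-f)\bigr)\ge c\log m$ $\Rightarrow$ integrate between $\eps$ and $1-\eps$ is exactly the Friedgut--Kalai strategy. Two small remarks. Your case split for eliminating $f'$ from the logarithm is fine, but the justification of the ``$\max>1/\sqrt m$'' branch deserves one extra sentence: it is not merely that one coordinate has influence $>1/\sqrt m$, but that its \emph{entire orbit} (of size $\ge m$) shares that influence, whence $f'=\sum_i\mathrm{Inf}_i\ge m\cdot m^{-1/2}=\sqrt m$; stated as written it could be misread. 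And, as you rightly flag, the genuine mathematical content is the biased-measure influence inequality with constants that behave well across the relevant range of $p$; this is Theorem~3.4 and the surrounding discussion in \cite{FK96}, and without it your sketch is a reduction rather than a proof. Within the scope of a sketch that invokes that input, the argument is correct and reaches the stated conclusion.
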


\sloppypar (For our application, all that matters is that the difference
$q-p$ appearing in the above theorem tends to $0$ as $m\to\infty$.  As noted
in \cite{FK96}, this can also be deduced from earlier results of Russo
\cite{russo} or Talagrand \cite{T94}.)

\begin{proof}[Proof of \cref{T:sharp}]
The second claim of \cref{T:sharp} is immediate since $T_n(p)$ is increasing
in $p$, so we turn to the first claim. Let $S$ be the set of order-$n$ tiles,
and let $A\subset\{0,1\}^S$ be the event that the unit square is tileable by
available tiles, where $0$ and $1$ represent unavailable and available
respectively. Thus $T_n(p) = \P_p(A)$. We will show below that $A$ is
invariant under the action of a group of permutations of tiles with orbits of
size $2^n$.

Suppose that $p<p_c$, and let $p<q<p_c$. The definition of $p_c$ implies that
$T_n(q) < 1-\eps$ for some $0<\eps<1/2$ and infinitely many $n$. Since
$c\log(1/2\eps)/\log(2^n) \to 0$ as $n\to\infty$, the Friedgut-Kalai theorem
then implies that $T_n(p) \leq \eps < 1/2$ for some $n$.
Corollary~\ref{C:leftSharpness} then gives $\lim_{n\to\infty} T_n(p)=0$ as
required.

It remains to exhibit a group of symmetries of $A$ with orbits of size $2^n$.
Consider the mapping $f_k:[0,1]\to[0,1]$ that changes the $k^\textrm{th}$
digit in the binary expansion, leaving the rest unchanged. It is easy to see
that for any $k$, the maps $(x,y) \mapsto (f_k(x),y)$ and $(x,y) \mapsto
(x,f_k(y))$ both permute dyadic tiles, since specifying a dyadic tile is
equivalent to specifying several initial digits in each of $x$ and $y$ (see
\cref{F:h1}).

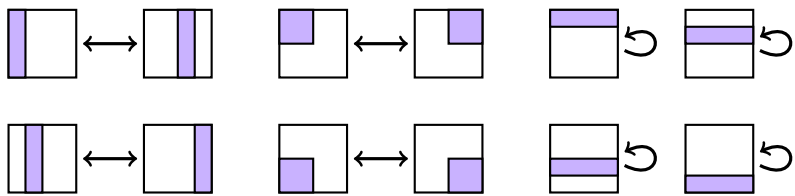
\begin{figure}
\begin{center}
\begin{tikzpicture} [thick, fill=blue!70!magenta!30,scale=0.9]
\begin{scope} [yshift=1.7cm]
\draw (0,0) rectangle +(1,1);
\filldraw (0,0) rectangle +(.25,1);
\draw [<->, very thick] (1.1,.5) -- (1.9,.5);
\draw (2,0) rectangle +(1,1);
\filldraw (2.5,0) rectangle +(.25,1);
\end{scope}
\begin{scope}
\draw (0,0) rectangle +(1,1);
\filldraw (.25,0) rectangle +(.25,1);
\draw [<->, very thick] (1.1,.5) -- (1.9,.5);
\draw (2,0) rectangle +(1,1);
\filldraw (2.75,0) rectangle +(.25,1);
\end{scope}
\begin{scope} [yshift=1.7cm, xshift=4cm]
\draw (0,0) rectangle +(1,1);
\filldraw (0,.5) rectangle +(.5,.5);
\draw [<->, very thick] (1.1,.5) -- (1.9,.5);
\draw (2,0) rectangle +(1,1);
\filldraw (2.5,.5) rectangle +(.5,.5);
\end{scope}
\begin{scope} [xshift=4cm]
\draw (0,0) rectangle +(1,1);
\filldraw (0,0) rectangle +(.5,.5);
\draw [<->, very thick] (1.1,.5) -- (1.9,.5);
\draw (2,0) rectangle +(1,1);
\filldraw (2.5,0) rectangle +(.5,.5);
\end{scope}
\begin{scope} [yshift=1.7cm, xshift=8cm]
\draw (0,0) rectangle +(1,1);
\filldraw (0,.75) rectangle +(1,.25);
\draw [->, very thick] (1.1,.4) .. controls (1.7,.1) and (1.7,.9) .. (1.1,.6);
\end{scope}
\begin{scope} [xshift=8cm]
\draw (0,0) rectangle +(1,1);
\filldraw  (0,.25) rectangle +(1,.25);
\draw [->, very thick] (1.1,.4) .. controls (1.7,.1) and (1.7,.9) .. (1.1,.6);
\end{scope}
\begin{scope} [yshift=1.7cm, xshift=10cm]
\draw (0,0) rectangle +(1,1);
\filldraw  (0,.5) rectangle +(1,.25);
\draw [->, very thick] (1.1,.4) .. controls (1.7,.1) and (1.7,.9) .. (1.1,.6);
\end{scope}
\begin{scope} [xshift=10cm]
\draw (0,0) rectangle +(1,1);
\filldraw (0,0) rectangle +(1,.25);
\draw [->, very thick] (1.1,.4) .. controls (1.7,.1) and (1.7,.9) .. (1.1,.6);
\end{scope}
\end{tikzpicture}
\end{center}
\caption{The symmetry $(x,y)\mapsto (f_1(x),y)$ swaps the left and
right halves of the square, and acts on the $12$ tiles of order $2$.}
\label{F:h1}
\end{figure}

Since these maps preserve intersection of tiles, they also preserve
tilings, and hence they preserve $A$. By applying a sequence of such maps
we can change any tile of a given shape to any other tile of the same
shape, and so the generated group has orbits of size $2^n$.
\end{proof}

In dimensions $d\geq 3$ we do not know whether the analogue of \cref{T:sharp}
holds, since our proof relies on \cref{L:JRS}. However, we can still apply
the Friedgut-Kalai theorem as above for all $d$.  Hence the argument for the
bound $p_c(3)\geq 1/8$ also gives that $T_n^{(3)}(p)\to 0$ for $p<1/8$.
\enlargethispage*{1cm}

%%%%%%%%%%%%%%%%%%%%%%%%%%%%%%%%%%%%%%%%%%%%%%%%%%%%%%%%%%%%%%%%%%
\section{Blocked tiles and chains} \label{S:TilesAndChains}

Our next objective is to prove Theorem~\ref{T:7/8}.  We start with some
important definitions.

%\begin{Def*}
Given a classification of the order-$n$ tiles into available and
unavailable, we say that a tile of order $k\leq n$ is \df{tileable} if
it can be tiled by available tiles of order $n$. Otherwise it is \df{blocked}
(in particular, tiles of order $n$ are blocked if and only if
they are unavailable).
%\end{Def*}
%
%\begin{Def*}
The two order-$(k+1)$ tiles that are obtained by bisecting an
order-$k$ tile with a horizontal cut are its \df{horizontal children}, and
similarly the two tiles that are obtained by cutting vertically are its
\df{vertical children}.
%\end{Def*}

\begin{Lemma}\label{L:trivial}
A tile of order less than $n$ is blocked if and only if at least one of its
horizontal children and at least one of its vertical children is blocked.
\end{Lemma}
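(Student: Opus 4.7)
The plan is to derive this as an immediate consequence of \cref{L:JRS}, applied not to the unit square but to $T$ itself via the canonical affine scaling. Fix a tile $T$ of order $k<n$, and let $\varphi$ be the affine map sending $T$ onto $[0,1]^2$. Under $\varphi$, the order-$n$ tiles contained in $T$ correspond bijectively to the order-$(n-k)$ tiles of $[0,1]^2$, and the four children of $T$ correspond to the two horizontal and two vertical halves of the unit square. Transporting the availability pattern through $\varphi$, the tile $T$ is tileable (by available order-$n$ tiles) if and only if $[0,1]^2$ is tileable by the corresponding available order-$(n-k)$ tiles — this is exactly the same reduction already used in the proof of \cref{triv}.

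Since $n-k\geq 1$, \cref{L:JRS} applies to this transported configuration. It says that $[0,1]^2$ is tileable if and only if either both horizontal halves are tileable, or both vertical halves are tileable. Pulling this back through $\varphi$, the tile $T$ is tileable if and only if both of its horizontal children are tileable or both of its vertical children are tileable.

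Negating this equivalence gives precisely the statement of the lemma: $T$ is blocked if and only if at least one horizontal child of $T$ is blocked \emph{and} at least one vertical child of $T$ is blocked.

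There is no real obstacle here; the only point that needs to be observed explicitly is that \cref{L:JRS}, though stated for the unit square, applies verbatim inside every dyadic tile of order $k<n$ thanks to the affine scaling $\varphi$, and that this scaling preserves the notions of availability and tileability.
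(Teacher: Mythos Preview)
Your proof is correct and is exactly the paper's approach: the paper's proof reads in full ``This is the contrapositive of \cref{L:JRS} (applied to a tile rather than the whole square).'' You have simply spelled out the affine rescaling and the negation step that the paper leaves implicit.
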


\begin{proof}
This is the contrapositive of Lemma~\ref{L:JRS} (applied to a tile rather
than the whole square).
\end{proof}

Our goal is to arrange sets of blocked order-$k$ tiles into {\em chains}.
If $s$ and $t$ are tiles of order $k$ whose interiors intersect, then the
\df{chain} $[s,t]$ is the set of all order-$k$ tiles that contain their
intersection $s\cap t$.

A chain contains a most horizontal and a most vertical tile (these are $s$
and $t$), and exactly one tile of each intermediate shape. Two order-$k$
tiles are called \df{adjacent} if their intersection is an order-$(k+1)$
tile. A \df{bond} of a chain is a pair of adjacent tiles in the chain.  Thus
the number of bonds of a chain is one less than the number of tiles.  Observe
also that a chain is precisely a directed path in the graph whose vertices
are all order-$k$ tiles, with adjacent pairs connected by an edge directed
towards the more vertical tile.  (In fact, this graph may be viewed as a
lamplighter graph corresponding to binary lamps on a path of length $n$; see
e.g.\ \cite{woess} for a definition.)

\begin{figure}
\begin{center}
\includegraphics[width=3.5cm]{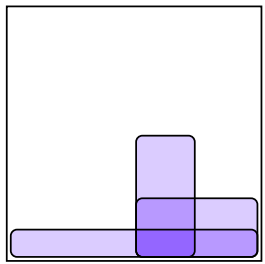}
\end{center}
\caption{A chain consisting of the three order-$3$ tiles that contain an order-$5$ tile.}
\label{F:chain}
\end{figure}

We now define the notion of successors of chains. Let $[s,t]$ be an order-$k$
chain, with $s$ the horizontal end-tile and $t$ the vertical end-tile. A
\df{successor} of the chain $[s,t]$ is any set of tiles of order $k+1$ with
the property that it includes exactly one horizontal child and exactly one
vertical child of every element of $[s,t]$.

The idea of the last definition is that a successor of $[s,t]$
is a minimal set with the property that if it were blocked,
then $[s,t]$ would be blocked, according to \cref{L:trivial}.
(We call a set of tiles blocked if all its tiles are blocked).
If a chain of order less than $n$ is blocked, then it possesses
some blocked successor. The last fact is not immediately
obvious, because of the requirement that a successor include
{\em exactly} one child of each type.  However, it follows from
\cref{F:bridge} and the proof of \cref{L:successor} below.  We
will eventually need a somewhat stronger statement---see
\cref{L:principal} in \cref{S:principal}.

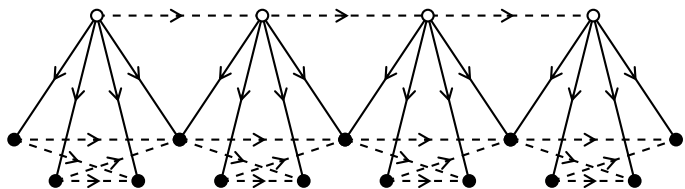
\begin{figure}
\begin{center}
\begin{tikzpicture} [scale=0.55]
\begin{scope}[thick, every node/.style={sloped,allow upside down}]
\draw[dashed] (2,4) -- node {\midarrow} (6,4)
-- node {\midarrow} (10,4) -- node {\midarrow} (14,4);
\foreach \x in {0,4,8,12}
{
\begin{scope}[shift={(\x,0)}]
\draw (2,4) -- node {\midarrow} (0,1);
\draw (2,4) -- node {\midarrow} (1,0);
\draw (2,4) -- node {\midarrow} (3,0);
\draw (2,4) -- node {\midarrow} (4,1);
\draw[dashed] (0,1) -- node {\midarrow} (4,1);
\draw[dashed] (1,0) -- node {\midarrow} (4,1);
\draw[dashed] (0,1) -- node {\midarrow} (3,0);
\draw[dashed] (1,0) -- node {\midarrow} (3,0);
\filldraw[fill=white] (2,4) circle (4pt);
\filldraw (0,1) circle (4pt);
\filldraw (1,0) circle (4pt);
\filldraw (3,0) circle (4pt);
\filldraw (4,1) circle (4pt);
\end{scope}
}
\end{scope}
\end{tikzpicture}
\end{center}
\caption{Tiles involved in possible successors of a chain.
White discs are the $4$ tiles of a chain of order $k$ (with $3$ bonds).
Black discs are tiles of order $k+1$.  Adjacent tiles
are connected by dashed arrows towards the more vertical tile.
Solid arrows connect a tile to its children
(with horizontal children to the left, and vertical children to the right).}
\label{F:bridge}
\end{figure}

A key ingredient in our proof is to classify the possible successors of a
given chain. We say that two chains are \df{separate} if they are disjoint,
and no tile of one is adjacent to any tile of the other.  (This implies in
particular that their tiles cannot be partitioned into one or two chains in
any other way).  Here and elsewhere, disjointness of chains of means simply
that they have no tiles in common; the tiles themselves are permitted to
intersect one another.

\begin{Lemma}\label{L:successor}
Any successor of a chain can itself be uniquely expressed as a union of
pairwise separate chains.  For a chain of $b$ bonds, any successor has
exactly $b+1$ bonds in total, and there are $4 \binom{b}{r}$ possible
successors that consist of $r+1$ separate chains, for each $r=0,\dots,b$.
\end{Lemma}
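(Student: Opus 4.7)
The plan is to set up a bijection between successors and $(b+2)$-tuples of binary choices, and then read off the chain decomposition from these choices. Label the chain tiles $u_0=s,u_1,\ldots,u_b=t$ in order of shape. First I would check that for each bond $(u_{i-1},u_i)$, the intersection $u_{i-1}\cap u_i$ is simultaneously a V-child of $u_{i-1}$ and an H-child of $u_i$, and is the unique child common to two chain tiles (the other V-child of $u_{i-1}$ and the other H-child of $u_i$ belong only to their parent). Consequently a successor $S$ is uniquely determined by: for each of the $b$ bonds, a binary choice of whether $u_{i-1}\cap u_i\in S$ (``fused'') or not (``unfused''); plus a free choice among the two H-children of $s$ and among the two V-children of $t$. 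Indeed, the ``exactly one'' clause in the definition forces the rest of $S$: at an unfused bond the V-child of $u_{i-1}$ and the H-child of $u_i$ must be their unique non-shared children. This already yields the total count $2^{b+2}=\sum_{r=0}^{b}4\binom{b}{r}$.

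Next, given a parameterisation with $r$ unfused bonds, these split the chain into $r+1$ maximal \emph{fused blocks} $[u_{a_j},u_{c_j}]$. For each block the tiles of $S$ associated to $u_{a_j},\ldots,u_{c_j}$---namely the chosen H-child of $u_{a_j}$, the shared tiles $u_i\cap u_{i+1}$ for $a_j\le i<c_j$, and the chosen V-child of $u_{c_j}$---are $c_j-a_j+2$ tiles of consecutive shapes. A direct computation with dyadic intervals shows they share a common order-$(k+c_j-a_j+2)$ intersection, which is exactly the order required to make them the tiles of an order-$(k+1)$ chain with $c_j-a_j+1$ bonds.

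The crux is separateness of the resulting $r+1$ chains. The geometric mechanism is that at each unfused bond $(u_c,u_{c+1})$ the forced V-child of $u_c$ in $S$ lies in the \emph{outer} half of $u_c$'s $x$-range (the half not containing $(s\cap t)$'s $x$-range), whereas every tile of the next block has $x$-range contained in the inner half; symmetrically the forced H-child of $u_{c+1}$ has $y$-range disjoint from every $y$-range appearing in the preceding block. A short case analysis using these two one-dimensional disjointnesses shows that any pair of tiles from different blocks either has empty intersection or has intersection of order at least $k+3$, ruling out adjacency. Uniqueness of the decomposition then follows: within each block the consecutive-shape tiles are adjacent, so each fused block is a connected component of $S$ under adjacency, and any decomposition into pairwise separate chains must coincide with this partition into components. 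Finally, the number of parameterisations with exactly $r$ unfused bonds is $\binom{b}{r}\cdot 4$, and $\sum_j(c_j-a_j+2)=(b-r)+2(r+1)=b+r+2$ gives the total tile count and hence $b+r+2-(r+1)=b+1$ bonds. I expect the main obstacle to be the separateness step, which demands careful bookkeeping of which halves of the $u_i$ ranges are ``inner'' versus ``outer'' relative to $s\cap t$.
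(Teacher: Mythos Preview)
Your proposal is correct and follows essentially the same route as the paper: parametrise successors by a binary ``split/no-split'' choice at each of the $b$ bonds together with the two free end-tile choices, read off the $r+1$ chains from the maximal unsplit runs, and obtain the enumeration $4\binom{b}{r}$ and the bond count $b+1$.

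The one place where you diverge is in proving separateness. The paper handles this in a single stroke by observing (with reference to the diagram of children) that among \emph{all} children of the tiles of $[s,t]$, the only adjacencies are between a horizontal child and a vertical child of the \emph{same} parent; since tiles in different blocks come from different parents (and the shared tiles $u_i\cap u_{i+1}$ at split bonds are excluded), separateness is immediate. Your inner/outer-half coordinate argument reaches the same conclusion but with more bookkeeping, and as you anticipate it requires care to cover pairs from non-consecutive blocks, not just neighbouring ones (the $y$-range disjointness of the forced H-child of $u_{c+1}$ does propagate back to all earlier blocks, and similarly for the $x$-range, so this goes through). The paper's formulation is cleaner and avoids the case analysis, but both are valid.
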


\begin{proof}
The key observations are illustrated in Figure~\ref{F:bridge}.  Each tile in
a chain $[s,t]$ has two horizontal children and two vertical children, but
not all these children are distinct. Specifically, if $u,v$ are two adjacent
tiles of the chain $[s,t]$ (with $u$ the more horizontal), then there is a
unique tile that is both a vertical child of $u$ and a horizontal child of
$v$, namely the intersection $u\cap v$.  Aside from such intersections (one
for each bond of $[s,t]$), all children of the tiles of $[s,t]$ are distinct.
Note also that any horizontal child of a given tile is adjacent to any
vertical child, and these are the only adjacencies among children of the
tiles of $[s,t]$.

We can now consider possible successors.  Firstly, if for each
bond in $[s,t]$ we take the intersection tile, and in addition
we choose one horizonal child $s'$ of $s$ and one vertical
child $t'$ of $t$, then we obtain one possible successor of
$[s,t]$---in fact, this successor is precisely the
order-$(k+1)$ chain $[s',t']$. We call a successor consisting
of a single chain \df{simple}. See \cref{F:successors}(b) for
an example. There are $2^2=4$ possible simple successors of a
given chain, since there are two possibilities each for $s'$
and $t'$.

On the other hand, consider a successor that does not include $u\cap v$. (In
the forthcoming application to blocked chains, it will be necessary to
consider such a case if $u\cap v$ is tileable). In that case the successor
must include the other vertical child of $u$, namely $\overline{u\setminus
v}$ (where the bar denotes topological closure), and similarly it must
include $\overline{v\setminus u}$.  Now if, for instance, for each of the
other bonds of $[s,t]$ we select the intersection tile as before (and we
select the same end tiles $s',t'$), the resulting successor can be expressed
as the union of the two separate chains $[s',\overline{u\setminus v}]$ and
$[\overline{v\setminus u},t']$. We say that a \df{split} occurred at the bond
$(u,v)$. In general, each bond of $[s,t]$ may or may not be split, and the
resulting successor can always be uniquely expressed as a union of separate
chains, with $r$ splits resulting in $r+1$ chains. Combined with the $4$
choices of the end tiles $s'$ and $t'$, this gives the claimed enumeration.
\end{proof}

\begin{figure}
\centering

\subfigure[]{
\includegraphics[width=2.8cm]{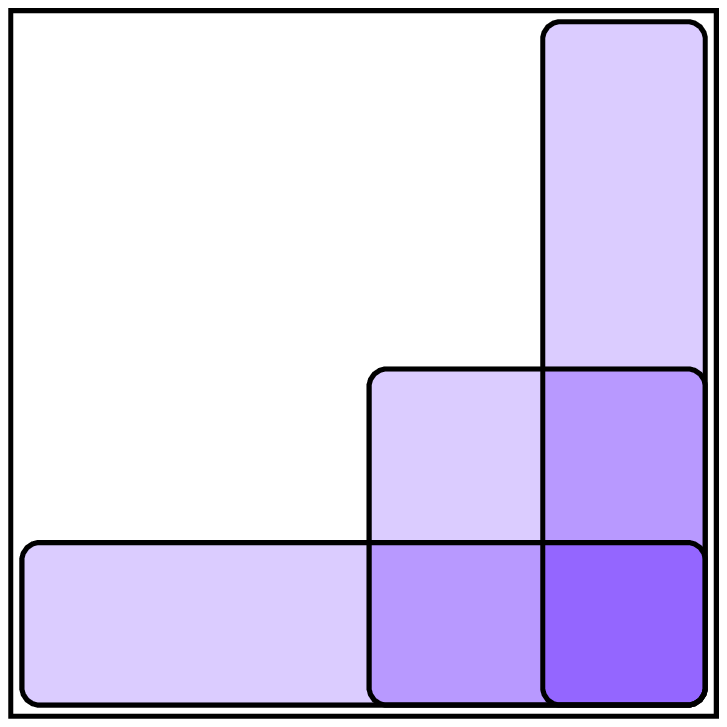}
} \hfill
\subfigure[]{
\includegraphics[width=2.8cm]{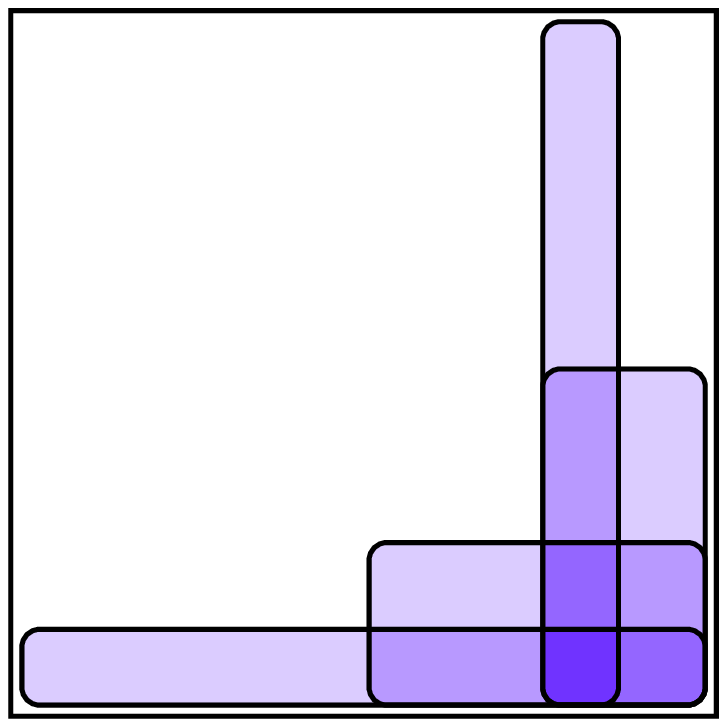}
} \hfill \subfigure[]{
\includegraphics[width=2.8cm]{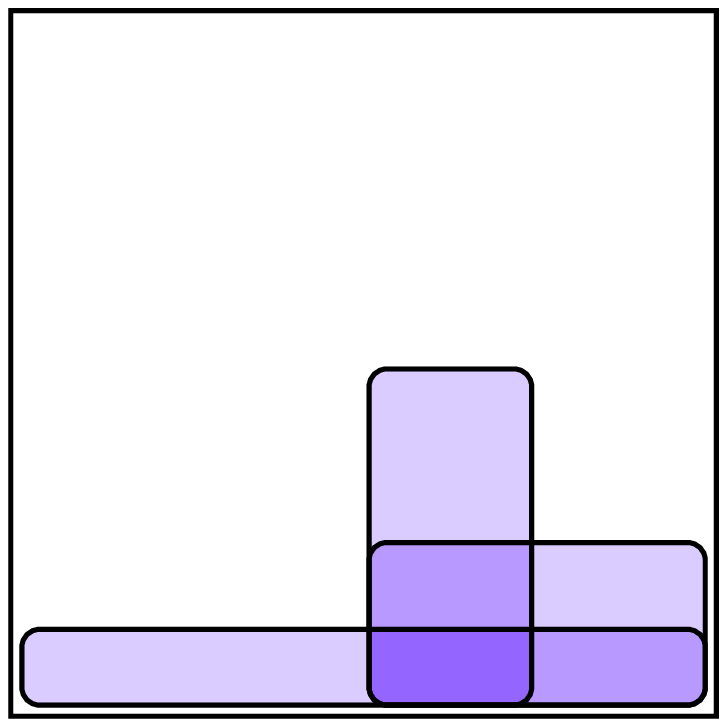}
\includegraphics[width=2.8cm]{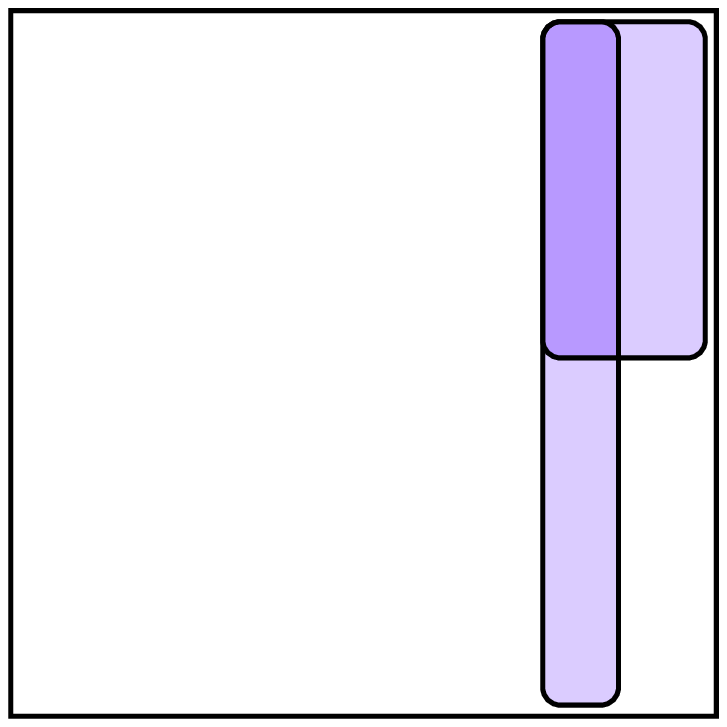}
}

\caption{(a) A chain of tiles of order $2$. (b) A simple successor of the
chain. (c) A split successor of the chain, with the same end tiles.}
\label{F:successors}
\end{figure}

See Figure \ref{F:successors} for an example---if one splits
the order-2 chain $[s,t]=\{s,u,t\}$ in (a) between tiles $u$
and $t$ (i.e.~between the square and the vertical tile), one
gets two chains shown in (c), the first being a simple
successor of $[s,u]$ and the second a simple successor of
$[t,t]$.

The above ideas will be applied as follows. If every tile of a chain of order
less than $n$ is blocked, then the chain must have a successor each of whose
tiles is blocked, by Lemma~\ref{L:trivial}.  Thus, if the unit square is
blocked, then we can start from the chain consisting only of the unit square,
and repeatedly find successors until we reach a set of chains of order $n$
consisting entirely of unavailable tiles.  The set of tiles in these chains
has the property that if they are unavailable then the square is not
tileable.  Next we want to count the possible outcomes of such a process.

%%%%%%%%%%%%%%%%%%%%%%%%%%%%%%%%%%%%%%%%%%%%%%%%%%%%%%%%%%%%%%%%%%
\section{Chain trees}\label{S:chaintrees}

We next introduce an object called a chain tree, which formalizes the idea of
an iterative construction of a blocking set of tiles.  Note however that the
definition itself will be purely combinatorial, and will not refer to
availability of tiles.

A \df{chain tree} of depth $n$ is a rooted tree of depth $n$ in which each
vertex is labeled with a chain of tiles (where we allow the possibility that
distinct vertices are labeled with the same chain or intersecting chains),
and with the following properties.

\begin{enumerate}
\item[(I)] The root corresponds to the order-$0$ chain consisting only of
the unit square.
\item[(II)] For any vertex $v$ at level less than $n$, the children of
    $v$ correspond to pairwise separate chains whose union is a successor
    of the chain at $v$.
\end{enumerate}

Note that each vertex at level $k$ corresponds to a chain of order $k$, and
the leaves of the tree correspond to chains of order $n$.  Observe also that
stripping the leaves from a chain tree of depth $n+1$ results in a chain tree
of depth $n$.

For a chain tree $T$ of depth $n$, let $c(T)$ be the number of leaves, and
$t(T)$ the total number of tiles in chains at the leaves (counted with
multiplicity; in other words the sum of the cardinalities of the chains
rather than the number of distinct tiles that occur). It is also convenient
to let $b(T)=t(T)-c(T)$ be the total number of bonds in the chains at the
leaves (also counted with multiplicity).  For example, for the simplest chain
trees consisting only of simple successors, and exactly one chain at each
level, we have $c(T)=1$ and $b(T)=n$.

We will need to enumerate chain trees of depth $n$ weighted according to the
number of leaf tiles. To this end we define the two-variable polynomial
\[
f_n = f_n(q,z) := \sum_T q^{b(T)} z^{c(T)},
\]
where the sum is over all chain trees of depth $n$.
%(up to equivalence by graph isomorphisms that preserve chains).
For instance, $f_0 = z$, since the unique depth zero chain tree
consists of a single chain containing a single tile (so no
bonds). At order $1$ there are $4$ possible successors to this
chain, and there cannot yet be any splitting. Hence $f_1 = 4 q
z$. At the next level, any given order-$1$ chain has $4$
possible simple successors, and $4$ possible split successors
into $2$ chains (each having one bond). Hence $f_2 = 4(4q^2z +
4q^2z^2) = 16 q^2 z (1+z)$.

Next we will see how $f_n$ is related to the tiling probability
$T_n$.

%%%%%%%%%%%%%%%%%%%%%%%%%%%%%%%%%%%%%%%%%%%%%%%%%%%%%%%%%%%%%%%%%%
\section{The principal chain tree}
\label{S:principal}

In this section we will prove the following.
\begin{Prop}\label{P:Tbound}  For all $n,p$ we have
\begin{equation}\label{eq:Tbound}
1 - T_n(p) \leq f_n(1-p,1-p).
\end{equation}
\end{Prop}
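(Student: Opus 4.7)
The strategy is a union bound over chain trees, predicated on a canonical chain tree attached to each blocked outcome. The key input I would take from \cref{S:principal} is that whenever $[0,1]^2$ is blocked, there exists a \emph{principal chain tree} $T^*$ of depth $n$ with two properties: every tile appearing in a leaf chain of $T^*$ is unavailable, and the leaf chains of $T^*$ are pairwise disjoint (share no tiles). Heuristically one would build such a tree top-down, starting from the order-$0$ chain consisting of the unit square: at each step \cref{L:trivial} guarantees a blocked horizontal and a blocked vertical child of each tile in the current chain, hence a blocked successor, which decomposes uniquely into pairwise separate chains by \cref{L:successor}; these become the children in the tree. This construction produces a chain tree whose leaves are all unavailable, but not automatically one whose leaves are \emph{disjoint}; forcing disjointness requires choosing successors according to the minimising rule outlined in \cref{prelim}, and verifying that this rule works globally is the crux, deferred to \cref{S:principal}.

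Given the existence of $T^*$, the proposition follows from a short calculation. I would estimate
\[
1 - T_n(p) = \P\bigl([0,1]^2 \text{ is blocked}\bigr) \leq \sum_{T} \P(T^* = T),
\]
where the sum ranges over chain trees $T$ of depth $n$ with pairwise disjoint leaves (the only possible values of $T^*$). On the event $\{T^* = T\}$ every tile appearing in a leaf chain of $T$ is unavailable; by disjointness these are $t(T)$ distinct tiles, each independently unavailable with probability $1-p$, giving
\[
\P(T^* = T) \leq (1-p)^{t(T)}.
\]

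Finally, I would enlarge the index of summation to \emph{all} depth-$n$ chain trees, a legitimate step since each summand is nonnegative, and identify the resulting expression with $f_n$ evaluated at $q = z = 1-p$ via $t(T) = b(T) + c(T)$:
\[
1 - T_n(p) \leq \sum_{T} (1-p)^{b(T)} (1-p)^{c(T)} = f_n(1-p,1-p).
\]
The main obstacle is genuinely the disjointness of the leaves of $T^*$. Without it, $t(T)$ would overcount tiles shared between leaves, so $(1-p)^{t(T)}$ would be strictly smaller than the joint-unavailability probability $(1-p)^{\#\mathrm{distinct}}$; the key inequality $\P(T^* = T) \leq (1-p)^{t(T)}$ would then fail and the bound could not be packaged into the two-variable generating function $f_n$.
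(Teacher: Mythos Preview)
Your proposal is correct and follows essentially the same route as the paper: invoke \cref{L:principal} for the existence of a principal chain tree when the square is blocked, \cref{L:disjoint} for pairwise disjointness of its chains, and then a union bound over chain trees to arrive at $f_n(1-p,1-p)$. The only cosmetic difference is that you fix a particular principal tree $T^*$ and sum over its possible values, whereas the paper sums directly over all chain trees with disjoint leaf chains; both are the same union bound.
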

Then in \cref{analysis} we will analyze the asymptotic behaviour of
$f_n(q,q)$ for $q$ small, which will enable us to deduce \cref{T:7/8}.

To motivate the proof of \cref{P:Tbound}, observe that
$$f_n(q,q) = \sum_T q^{t(T)},$$ which enumerates chain trees
weighted by $q$ to the number of tiles at depth $n$.  If we set
$q=1-p$, and if it happens that the chains at depth $n$ of $T$
are pairwise disjoint, then the term $q^{t(T)}$ is the
probability that all their constituent tiles are unavailable.

If it were the case that the leaves of a chain tree always
corresponded to pairwise disjoint chains, then \cref{P:Tbound}
would follow immediately from \cref{L:trivial} by the argument
outlined at the end of \cref{S:TilesAndChains}.  However, there
do exist chain trees with repeated tiles (see \cref{F:repeated}
for an example).
\begin{figure}
\begin{center}
\begin{tikzpicture}
\node (A) at (0,0) {
\includegraphics[width=2.5cm]{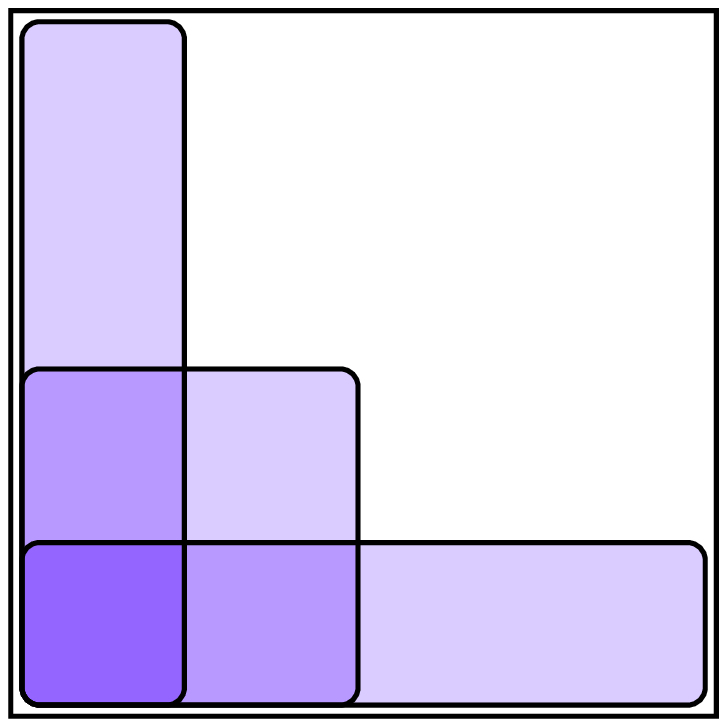}
};
\node (B1) [right=of A, yshift=2.65cm] {
\includegraphics[width=2.5cm]{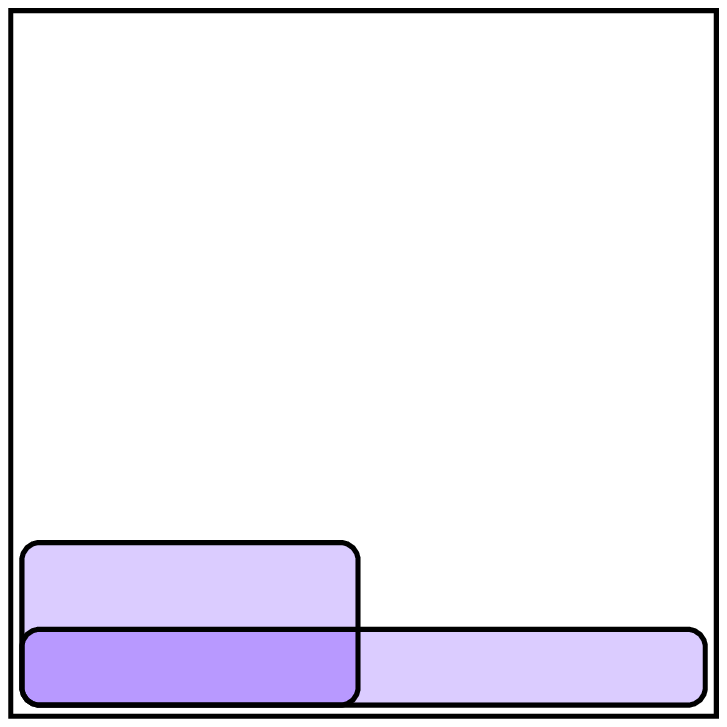}
};
\node (B2) [right=of A] {
\includegraphics[width=2.5cm]{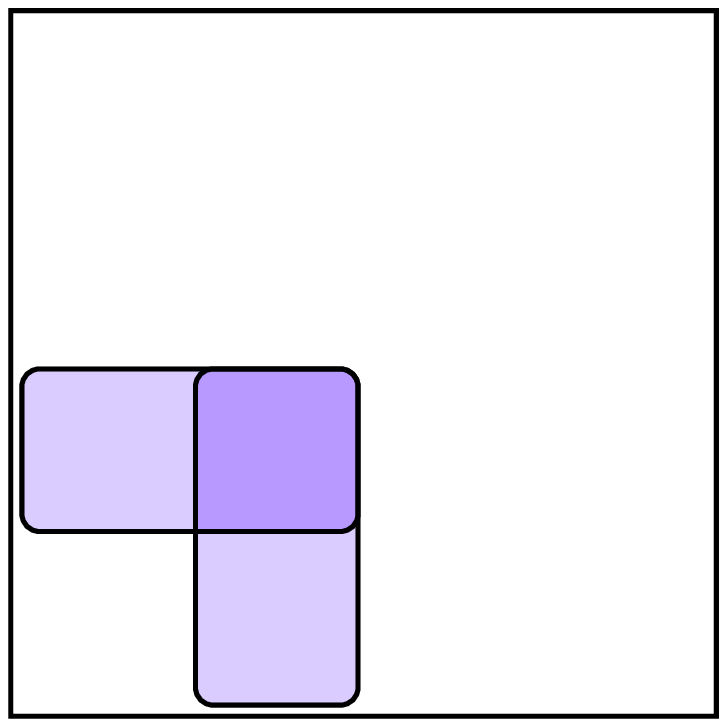}
};
\node (B3) [right=of A, yshift=-2.65cm] {
\includegraphics[width=2.5cm]{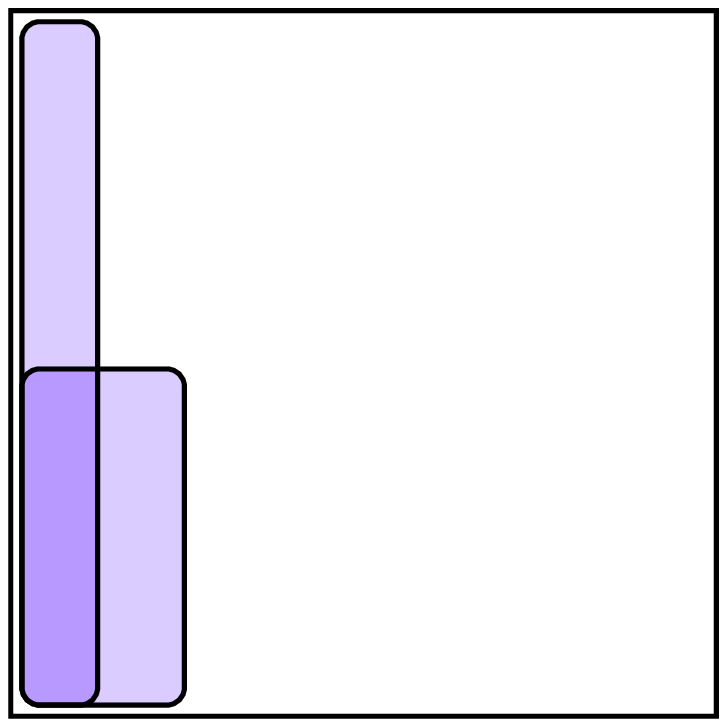}
};
\node (C1) [right=of B1] {
\includegraphics[width=2.5cm]{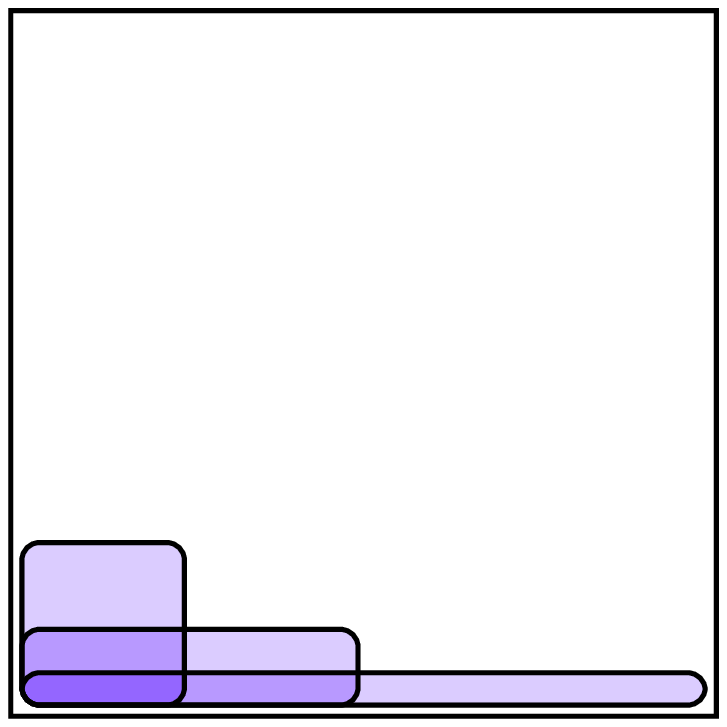}
};
\node (C2) [right=of B2] {
\includegraphics[width=2.5cm]{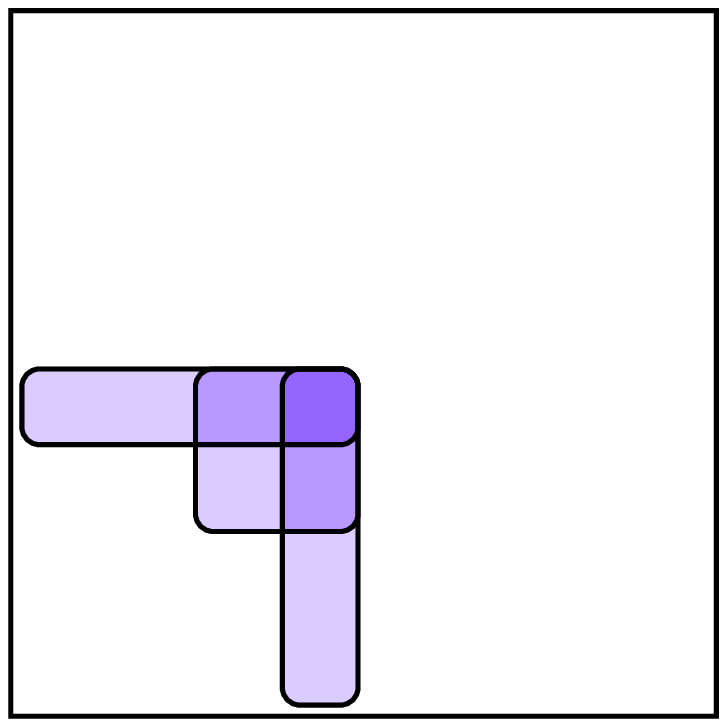}
};
\node (C3) [right=of B3] {
\includegraphics[width=2.5cm]{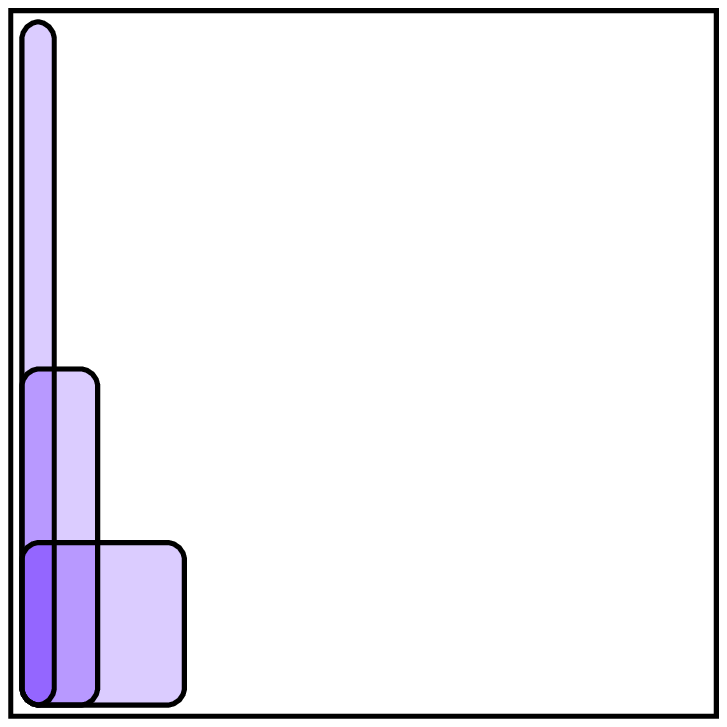}
};
\draw [->, very thick] (A) -- (B1);
\draw [->, very thick] (A) -- (B2);
\draw [->, very thick] (A) -- (B3);
\draw [->, very thick] (B1) -- (C1);
\draw [->, very thick] (B2) -- (C2);
\draw [->, very thick] (B3) -- (C3);
\end{tikzpicture}
\end{center}
\caption{Part of a chain tree with a duplicate tile (arrows are from a node to
its children in the tree). The $\frac14\times\frac14$ bottom left
corner tile is contained in both the top and bottom leaves.}
\label{F:repeated}
\end{figure}

Therefore, we will define a special class of chain trees whose
chains will turn out to be disjoint.  They will be constructed
by iteratively finding blocked successors to each blocked
chain, as discussed earlier, but with the additional
restriction that we \textit{split only where necessary}.  More
formally, given a designation of all order-$n$ tiles as
available and unavailable, we call a depth-$n$ chain tree $T$ a
\df{principal chain tree} if in addition to conditions (I) and
(II), it satisfies the following.
\begin{enumerate}
\item[(III)] Each tile in each chain of $T$ is blocked.
\item[(IV)] If the chain corresponding to some vertex contains a bond
    $(u,v)$ for which the tile $u\cap v$ is blocked, then one of its
    children contains $u\cap v$ in its chain (i.e.\ there is no split at
    this bond).
\end{enumerate}

\begin{Lemma}\label{L:principal}
If $[0,1]^2$ is blocked, then there exists a principal chain tree.
\end{Lemma}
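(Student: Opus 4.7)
The plan is to build the principal chain tree top-down by induction on depth. The root is labelled with the trivial order-$0$ chain $\{[0,1]^2\}$, which is blocked by hypothesis, giving condition (I). At a vertex $v$ already placed at some level $k<n$ whose label is a blocked chain $[s,t]=(w_0,\dots,w_b)$, the task is to exhibit a blocked successor satisfying condition (IV). By \cref{L:successor}, such a successor then decomposes uniquely into pairwise separate chains, which I attach as the children of $v$; since each child chain consists entirely of blocked tiles, the induction continues for the full $n$ levels.

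I define the successor by a deterministic rule. For each bond $i\in\{1,\dots,b\}$ write $m_i = w_{i-1}\cap w_i$; recall that $m_i$ is simultaneously a vertical child of $w_{i-1}$ and a horizontal child of $w_i$. I decline to split at bond $i$ (that is, include $m_i$) exactly when $m_i$ is blocked, and I split otherwise. At the endpoints, both horizontal children of $w_0$ and both vertical children of $w_b$ are ``free'' (none is of the form $m_i$); in each such free slot I pick any blocked child, which exists because $w_0$ and $w_b$ are blocked and \cref{L:trivial} then guarantees a blocked horizontal child of $w_0$ and a blocked vertical child of $w_b$. A short bookkeeping check shows that these choices specify exactly one horizontal and one vertical child of every $w_i$, so they do define a successor in the sense of \cref{S:TilesAndChains}, and condition (IV) holds by construction.

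The only substantive point is condition (III): every tile placed into the successor must be blocked. The free endpoint choices were made to be blocked. At each bond $i$ where we do not split, the included tile $m_i$ is blocked by the very criterion used to decide not to split. The delicate case is a split bond $i$: here the successor contains the ``other'' vertical child $\overline{w_{i-1}\setminus w_i}$ of $w_{i-1}$ and the ``other'' horizontal child $\overline{w_i\setminus w_{i-1}}$ of $w_i$ in place of $m_i$, and both must be verified blocked. But a split occurs only when $m_i$ is \emph{not} blocked, so \cref{L:trivial} applied to $w_{i-1}$, together with the fact that at least one of its vertical children is blocked, forces $\overline{w_{i-1}\setminus w_i}$ to be blocked; symmetrically for $\overline{w_i\setminus w_{i-1}}$ via $w_i$. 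This compatibility --- the rule ``split exactly when forced'' matching tightly against the content of \cref{L:trivial} --- is the heart of the argument and really the only place where any care is required; the rest is the inductive bookkeeping above.
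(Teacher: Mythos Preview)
Your proof is correct and follows exactly the construction the paper gives: build the tree top-down, at each chain choosing the successor that splits only at bonds whose intersection tile is tileable, and using \cref{L:trivial} to guarantee a blocked child at every free slot. The paper's own proof is a one-sentence sketch of this same procedure; you have simply supplied the verification of conditions (II)--(IV) that the paper leaves to the reader.
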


\begin{proof}
Start with the blocked chain containing only $[0,1]^2$, and iteratively find
a blocked successor of each previously-constructed chain, splitting at a bond
$(u,v)$ only when the tile $u\cap v$ is tileable.
\end{proof}

The key fact about principal chain trees is the following.

\begin{Lemma}\label{L:disjoint}
In a principal chain tree, the chains corresponding to distinct vertices are
disjoint.
\end{Lemma}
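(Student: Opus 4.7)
The plan is to induct on the depth $n$ of the chain tree; the base case is trivial. For the step, I would first observe that stripping the leaves of a depth-$n$ principal chain tree yields a depth-$(n-1)$ principal chain tree---properties (I)--(IV) are inherited (the last two vacuously at the new leaves)---so the inductive hypothesis gives pairwise disjointness of chains at levels $0,\ldots,n-1$. Since chains at different levels consist of tiles of different orders, only disjointness among level-$n$ chains needs to be established.

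Suppose for contradiction that $T\in C(v_1)\cap C(v_2)$ for distinct level-$n$ vertices $v_1,v_2$. If $v_1$ and $v_2$ are siblings, (II) gives separateness, contradicting the existence of $T$. Otherwise let $p_1\neq p_2$ be their parents, with $C(p_1)\cap C(p_2)=\emptyset$ by induction. Since $T$ lies in both successors and its only parents are the horizontal parent $H_T$ and the vertical parent $V_T$, disjointness forces (after relabelling) $H_T\in C(p_1)$ and $V_T\in C(p_2)$; the degenerate case in which $T$ has only a single parent is even easier, as that unique parent would have to lie in both $C(p_1)$ and $C(p_2)$. Note that $H_T$ and $V_T$ are adjacent order-$(n-1)$ tiles whose intersection is $T$, so I have two adjacent tiles sitting in two disjoint chains at level $n-1$.

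The main idea is to propagate this ``adjacent tiles in disjoint chains'' situation upward through the tree. Let $w$ be the deepest common ancestor of $v_1$ and $v_2$; its two distinct children on the paths to $v_1$ and $v_2$, one level below $w$, are siblings whose chains are separate by (II). By downward induction on the level $j$, from $n-1$ down to one level below $w$, I plan to produce a pair of adjacent order-$j$ ancestor tiles of $T$, say $Y_j$ and $Z_j$, lying in the chains of the two path-vertices at level $j$. At the sibling level just below $w$, this yields adjacent tiles in separate sibling chains, the desired contradiction.

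The step from level $j$ to level $j-1$ centres on the common parent $P$ of $Y_j$ and $Z_j$ (the unique order-$(j-1)$ tile of the intermediate shape having $Y_j$ as a vertical child and $Z_j$ as a horizontal child). If $P$ lies in both of the two chains at level $j-1$ above, the inductive disjointness collapses them to a single vertex, so the vertices at level $j$ are siblings and (II) closes the argument. If $P$ lies in exactly one of them, the parent choice on the other side is forced and a direct shape computation confirms that adjacency persists at level $j-1$. The delicate case is when $P$ lies in neither; here I would invoke property (IV): by \cref{L:trivial} $P$ is itself blocked (since $Y_j$ and $Z_j$ are a blocked vertical child and a blocked horizontal child of $P$), and applying (IV) to blocked bond intersections in the chains above should either force the missing ancestor back into the structure or produce a direct contradiction with inductive disjointness one level further up. Ironing out this last case is where I expect the main technical obstacle to lie.
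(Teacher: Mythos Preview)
Your inductive framework is natural, and you have correctly isolated the crux: the case where the common parent $P$ of $Y_j$ and $Z_j$ lies in neither $C(p_1')$ nor $C(p_2')$.  Unfortunately this case is a genuine obstruction, not merely a technicality, and the appeal to property~(IV) does not close it.

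Concretely: if $P\notin C(p_1')$ then the parent of $Y_j$ in $C(p_1')$ is forced to be the \emph{horizontal} parent $Y_{j-1}'$ of $Y_j$, and symmetrically the parent of $Z_j$ in $C(p_2')$ is the \emph{vertical} parent $Z_{j-1}'$ of $Z_j$.  These two order-$(j-1)$ tiles differ in shape by \emph{two} steps, so they are not adjacent; your invariant ``adjacent tiles in disjoint chains'' is lost.  One can check that $Y_{j-1}'\cap P=Y_j$ and $P\cap Z_{j-1}'=Z_j$, so $P$ sits between them, but $P$ is not a bond-intersection inside either $C(p_1')$ or $C(p_2')$ (bond-intersections there have order $j$, not $j-1$), so (IV) gives no information about those chains.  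Nor is there any reason $P$ should be a bond-intersection in $C(p_1'')$ or $C(p_2'')$: the tiles in $C(p_1'')$ all contain some fixed small rectangle determined by the ends of that chain, and nothing forces this rectangle to lie inside $P$.  So the hoped-for mechanism ``(IV) forces $P$ back into the structure'' does not fire.  Moreover the divergence can compound: at level $j-2$ the forced ancestors can again move outward, leaving the two tracked tiles three shape-steps apart, and so on.

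The paper's proof sidesteps this by tracking the much weaker invariant that both ancestor tiles contain the original tile $w$ (your $T$), carrying it all the way to the last common ancestor chain $C$.  There, a split between the two ancestor tiles in $C$ means (by (IV)) that some bond-intersection $u\subseteq C$ is \emph{tileable}; then the ``only if'' direction of \cref{L:trivial} is used to grow a directed path of tileable tiles from $u$ toward $w$ in the rectangular grid of tiles containing $w$.  Planarity of this grid forces that tileable path to cross one of the two blocked ancestry paths, which is the contradiction.  Your scheme never invokes the ``only if'' direction of \cref{L:trivial}, and it is precisely that direction which makes the argument go through.
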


\begin{proof}
The proof relies on two observations.  First, consider the graph whose
vertices are all tiles (of all orders) that contain as a subset some fixed
tile $w$, and with a directed edge from a tile to its children in this set.
This graph is isomorphic to a rectangular portion of the oriented square
lattice $\mathbb{Z}^2$, as shown in Figure~\ref{F:rectGrid}.  If $w$ has
shape $2^{-a}\times 2^{-b}$, the point $(i,j)$ corresponds to the unique tile
of shape $2^{-i}\times 2^{-j}$ that contains $w$, for each $0\leq i\leq a$
and $0\leq j\leq b$.  (In the figure, the first coordinate increases from top
to bottom, and the second from left to right).  Adjacent pairs of tiles
correspond to points differing by the diagonal vector $(-1,1)$. Thus, a chain
all of whose tiles contain $w$ corresponds to an interval on some diagonal in
the lattice.

\begin{figure}
\begin{center}
\begin{tikzpicture} [scale=1, every node/.style={sloped,allow upside down}]
\draw (6,0) node[below right] {$w$};
\draw (0,4) node[above left] {$[0,1]^2$};
\foreach \x in {0,1,...,6}
\foreach \y in {0,1,...,4}
{\fill [black, xshift=\x cm, yshift=\y cm] (0,0) circle (2pt);}
\foreach \x in {0,1,...,6}
\foreach \y in {1,2,..., 4}
{\draw [xshift= \x cm, yshift=\y cm] (0,0) -- node {\midarrow} (0,-1);}
\foreach \x in {0,1,...,5}
\foreach \y in {0,1,..., 4}
{\draw [xshift= \x cm, yshift=\y cm] (0,0) --node {\midarrow} (1,0);}
\foreach \x in {0,1,...,5}
\foreach \y in {0,1,..., 4}
{\draw [xshift= \x cm, yshift=\y cm] (0,0) --node {\midarrow} (1,0);}
\foreach \x in {0,1,...,5}
\foreach \y in {0,1,..., 3}
{\draw [dashed, xshift= \x cm, yshift=\y cm] (0,0) --node {\midarrow} (1,1);}
\draw [thick, rounded corners, rotate=45, shift={(1.1,-.2)}] (0,0) rectangle
+(4.8,0.4);
\end{tikzpicture}
\end{center}

\caption{The tiles that contain $w$.  Solid arrows point from tiles to their
children, and dashed arrows connect adjacent pairs of tiles, pointing towards
the more vertical tile.  A chain is highlighted.} \label{F:rectGrid}
\end{figure}
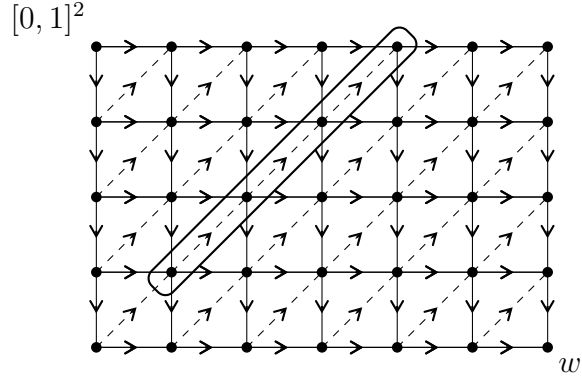

Second, we observe that paths in a chain tree may be mapped to paths in the
lattice in the following way.  Recall that different vertices in the chain
tree may {\em a priori} be labeled with the same chain (although the present
proof will in particular rule this out), so we must be careful to distinguish
between vertices in the tree and their associated chains. Suppose that a tile
$w$ is contained in a chain corresponding to some vertex $x$ in a chain tree.
Consider the unique self-avoiding path $\pi$ from $x$ to the root in the
chain tree.  The chain of the parent vertex of $x$ in this path must contain
either the horizontal parent or the vertical parent of $w$ (possibly both).
(The \df{horizontal parent} of $w$ is the unique tile that has $w$ as a
vertical child, and the \df{vertical parent} is defined similarly). Iterating
this, we find a sequence of blocked tiles, each a parent of the previous one,
starting at $w$ and ending at $[0,1]^2$.  We call such a sequence an
\df{ancestry} of $w$. Each tile of an ancestry belongs to the chain of the
corresponding vertex of the path $\pi$. Each tile in an ancestry contains
$w$, and an ancestry corresponds to a (backwards) directed path in the
lattice.

Now suppose for a contradiction that an order-$n$ tile $w$ occurs in two (not
necessarily different) chains corresponding to different vertices at level
$n$ of a principal chain tree.  These vertices have a last common ancestor
vertex in the chain tree, with a corresponding chain $C$.  We can also find
two ancestries of $w$ corresponding to its membership in the two initial
chains, and each of them must include a tile in $C$; call these two tiles $s$
and $t$, and let $s_+,t_+$ be their respective children in the two
ancestries. By the choice of $C$, the tiles $s_+$ and $t_+$ must lie in the
chains of two different children of $C$.  Hence the chain tree must include a
split at some bond of $C$ somewhere between $s$ and $t$.  By property (IV) of
a principal chain tree, this split must occur because some tile $u$ that is
the intersection of two adjacent tiles in the chain $[s,t]\subseteq C$ is
tileable. Note that $s$ and $t$ both contain $w$, hence so does every tile in
$[s,t]$, and hence so does $u$.

\begin{figure}
\begin{center}
\begin{tikzpicture} [scale=0.8]
\foreach \x in {1,...,10}
\foreach \y in {0,1,...,7}
\fill (\x,\y) circle (1.5pt);
\draw (10,0) node[below right] {$w$};
\draw (1,7) node[above left] {$[0,1]^2$};
\draw (2,3) node[above left] {$s$};
\draw (6,7) node[above left] {$t$};
\draw (5,5) node[below left] {$u$};
\draw (2,3) -> (2,2) -> (3,2) -> (4,2) -> (5,2) -> (5,1) -> (6,1) ->
(7,1) -> (8,1) -> (8,0) -> (9,0) -> (10,0);
\draw[style=double, double distance=2pt] (7,1) -> (8,1);
\draw (6,7) -> (6,6) -> (7,6) -> (7,5) -> (7,4) -> (7,3) -> (7,2) -> (7,1);
\draw (5,5) -> (6,5) -> (6,4) -> (6,3) -> (7,3) -> (8,3) -> (9,3) -> (10,3);
\draw (8,1) -> (9,1) -> (10,1) -> (10,0);
\foreach \x in {(5,5), (6,5), (6,4), (6,3), (7,3), (12,4), (8,3), (9,3), (10,3)} {
\filldraw [fill=white] \x circle (5pt);
}
%\draw (7,3) circle (5pt);
\foreach \x in {(2,3), (2,2), (3,2), (4,2), (5,2), (5,1), (6,1), (7,1),
(8,1), (8,0), (9,0), (10,0), (6,7), (6,6), (7,6), (7,5), (7,4),
(7,3), (7,2), (9,1), (10,1), (12,3)} {\fill \x circle (3pt);}
\draw [thick, rounded corners, rotate=45, shift={(3.2,.5)}] (0,0)
rectangle +(6.4,0.4);
\draw (12,4.1) node[right] {\, = tileable};
\draw (12,3.1) node[right] {\, = blocked};
\end{tikzpicture}
\end{center}
\caption{The path of tileable tiles from $u$ must intersect one of the paths
of blocked tiles from $s$ and $t$ to $w$, a contradiction.}
\label{F:rectPaths}
\end{figure}
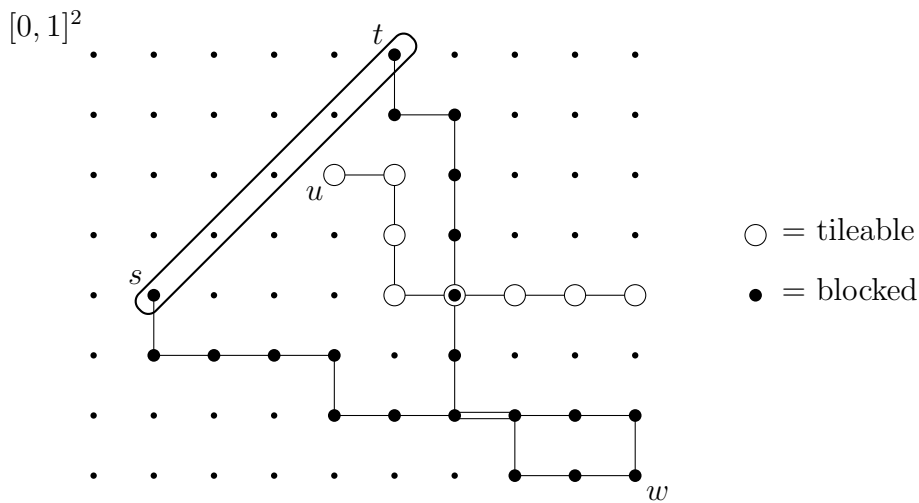

The two ancestries of blocked tiles from $w$ to each of $s$ and $t$
correspond to directed paths in the square lattice, as indicated in
Figure~\ref{F:rectPaths}.  (The two ancestries might {\em a priori}
intersect, although aside from $s$ and $t$ their tiles occur in chains at
distinct vertices in the chain tree).  Now consider the tile $u$, which abuts
some bond of the chain $[s,t]$ in the lattice as shown.  By
Lemma~\ref{L:trivial}, either both horizontal or both vertical children of
$u$ are tileable.  (This is the only place where we use the ``only if''
direction of Lemma~\ref{L:trivial}). Suppose that $u$ is strictly larger than
$w$ in both width and height.  Then $w$ is contained in one of $u$'s
horizontal children and one of its vertical children, hence $u$ has a child
that contains $w$ and is tileable.  Iterating this argument until we reach a
tile of the same width or height as $w$, we obtain a directed path of
tileable tiles in the lattice, starting at $u$ and ending in the row or
column containing $w$, as shown in Figure~\ref{F:rectPaths}.  Such a path
must intersect one of the two blocked ancestry paths, giving a contradiction.
\end{proof}

\begin{proof}[Proof of \cref{P:Tbound}]
Using \cref{L:principal,L:disjoint},
\begin{align*}
1 - T_n(p)
& = \P\bigl([0,1]^2\text{ is blocked}\bigr)\\
& = \P(\text{there exists a principal chain tree})\\
& \le \sum_T (1-p)^{t(T)} \mathbf{1}[\text{$T$ has pairwise disjoint chains}]\\
& \le \sum_T (1-p)^{t(T)} \\
& = f_n(1-p,1-p),
\end{align*}
where the sums are over all chain trees of depth $n$.
\end{proof}

%%%%%%%%%%%%%%%%%%%%%%%%%%%%%%%%%%%%%%%%%%%%%%%%%%%%%%%%%%%%%%%%%%
\section{Analysis of the generating function}
\label{analysis}

To complete the proof of \cref{T:7/8}, all that remains is to
analyze the asymptotic behavior of $f_n$.  Recall that this is
the generating function for \textit{all} possible chain trees
of depth $n$---we will no longer be concerned with availability
or principal chain trees.

\begin{Prop}\label{P:f_rec}
The polynomials $f_n$ satisfy the recursion
\begin{equation} \label{f}
f_{n+1}(q,z) = f_n\bigl(q(1+z), 4qz\bigr).
\end{equation}
\end{Prop}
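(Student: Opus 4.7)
The plan is to exploit the tree structure: a depth-$(n+1)$ chain tree is obtained uniquely from its depth-$n$ truncation (the tree with leaves stripped) by attaching, at each leaf, some successor of the leaf's chain, broken into its separate chain components. Since the choices at different leaves are independent, $f_{n+1}(q,z)$ will factor over leaves of the depth-$n$ truncation, and the local factor at a leaf will turn out to be exactly what is obtained from the generating weight $q^{b}z$ by the substitution $q\mapsto q(1+z)$, $z\mapsto 4qz$.

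First I would fix a depth-$n$ chain tree $T'$ with leaves of bond-counts $b_1,\dots,b_k$, so $c(T')=k$ and $b(T')=\sum_i b_i$. The depth-$(n+1)$ chain trees $T$ that reduce to $T'$ under leaf-stripping are in bijection with tuples where, at the $i$th leaf, we choose a successor of the chain there (and view its decomposition into separate chains as furnishing the children of that leaf, as required by (II)). Apply \cref{L:successor}: a chain with $b_i$ bonds has $4\binom{b_i}{r}$ successors that decompose into $r+1$ separate chains, and every such successor has exactly $b_i+1$ bonds total. Hence the contribution of the $i$th leaf to $\sum_T q^{b(T)} z^{c(T)}$ (restricted to extensions of $T'$) is
\[
\sum_{r=0}^{b_i} 4\binom{b_i}{r} q^{b_i+1} z^{r+1}
= 4qz \cdot q^{b_i} \sum_{r=0}^{b_i}\binom{b_i}{r} z^{r}
= (4qz)\,\bigl(q(1+z)\bigr)^{b_i},
\]
by the binomial theorem.

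Multiplying over the $k$ independent leaf choices gives
\[
\sum_{T\,:\,T\text{ reduces to }T'} q^{b(T)} z^{c(T)} = (4qz)^{c(T')}\,\bigl(q(1+z)\bigr)^{b(T')}.
\]
Summing over all depth-$n$ chain trees $T'$ yields
\[
f_{n+1}(q,z) = \sum_{T'} \bigl(q(1+z)\bigr)^{b(T')} (4qz)^{c(T')} = f_n\bigl(q(1+z),\,4qz\bigr),
\]
which is \eqref{f}.

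There is no real obstacle here beyond careful bookkeeping; the only point that needs to be double-checked is that the bijection between depth-$(n+1)$ chain trees and pairs (depth-$n$ chain tree, choice of successor at each leaf) is exact, which is built into the definition of chain tree via condition (II). The binomial identity then does all the work.
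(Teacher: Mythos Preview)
Your argument is correct and essentially identical to the paper's own proof: both exploit the bijection between depth-$(n+1)$ chain trees and depth-$n$ chain trees together with a choice of successor at each leaf, compute the per-leaf generating function via \cref{L:successor} and the binomial theorem to get $(4qz)\bigl(q(1+z)\bigr)^{b_i}$, and then multiply over leaves and sum to obtain the substitution $(q,z)\mapsto\bigl(q(1+z),4qz\bigr)$. The only differences are notational.
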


\begin{proof}
Given a chain tree $T$ of depth $n$, we may obtain a chain tree of depth
$n+1$ by choosing a successor of each leaf chain of $T$ and adding the
appropriate children.  All chain trees of depth $n+1$ can be obtained (each
exactly once) in this way.

Fix some chain of $\beta$ bonds, and for any successor $S$, write $b(S)$ for
the number of bonds and $c(S)$ for the number of pairwise separate chains.
Then by \cref{L:successor}, the generating function of the possible
successors is given by
\begin{equation}\label{one-ext}
\sum_S q^{b(S)} z^{c(S)} = \sum_{r=0}^\beta
\binom{\beta}{r}4q^{\beta+1}z^{r+1}=\big[q(1+z)\big]^\beta 4qz,
\end{equation}
where the sum is over all possible successors of the given chain.

Consider a depth-$n$ chain tree $T$ whose leaves have $c$ chains, $t$ tiles
and $b=t-c$ bonds (counted with multiplicities, as before).  This chain tree
contributes a term $q^b z^c$ to $f_n(q,z)$.  The possible extensions of $T$
to depth $n+1$ contribute various terms to $f_{n+1}(q,z)$, and since we may
choose any successor for each leaf independently of the others, the sum of
these terms is the product of expressions of the form \eqref{one-ext} over
the leaves of $T$.  That is:
\begin{equation}\label{one-tree}
\prod_L \bigl[q(1+z)\bigr]^{\beta(L)} 4qz = \bigl[q(1+z)\bigr]^{b} (4qz)^c,
\end{equation}
where the product is over the leaves of $T$, and $\beta(L)$ is the number of
bonds of the chain at leaf $L$.

The polynomial $f_{n+1}(q,z)$ is obtained by summing the expression in
\eqref{one-tree} over all chain trees of depth $n$.  Therefore, it is
obtained from $f_n(q,z)$ by replacing each term $q^b z^c$ with the term
$[q(1+z)]^b (4qz)^c$. This gives \eqref{f}.
\end{proof}

\begin{Cor}\label{L:q_rec}
Fix $q$ and $z$ and write $f_n=f_n(q,z)$.  For every $n$,
\begin{equation}\label{eq:q_rec}
\frac{f_{n+2}}{f_{n+1}} = \frac{f_{n+1}}{f_n} + f_{n+1}.
\end{equation}
\end{Cor}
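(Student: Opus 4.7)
The plan is to prove \eqref{eq:q_rec} by induction on $n$, exploiting the fact from \cref{P:f_rec} that applying the index shift $n \mapsto n+1$ is equivalent to applying the change of variables $\phi(q,z) := \bigl(q(1+z),\, 4qz\bigr)$. Concretely, for every $k \ge 0$ and every $(q,z)$,
\[
f_{k+1}(q,z) = f_k\bigl(\phi(q,z)\bigr).
\]
Since the identity to be proved has the same ``shape'' at every $n$, an application of the hypothesis at the shifted point $\phi(q,z)$ should transform cleanly into the next case at $(q,z)$.

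For the base case $n=0$, I would just plug in the explicit values computed in \cref{S:chaintrees}: $f_0 = z$, $f_1 = 4qz$, $f_2 = 16 q^2 z(1+z)$, and check that
\[
\frac{f_2}{f_1} - \frac{f_1}{f_0} \;=\; 4q(1+z) - 4q \;=\; 4qz \;=\; f_1.
\]

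For the inductive step, I would assume the identity at $n-1$, viewed as a rational identity in two variables:
\[
\frac{f_{n+1}(q,z)}{f_n(q,z)} \;=\; \frac{f_n(q,z)}{f_{n-1}(q,z)} + f_n(q,z).
\]
I would then apply this identity at the point $(Q,Z) := \phi(q,z) = \bigl(q(1+z),\,4qz\bigr)$, obtaining
\[
\frac{f_{n+1}(Q,Z)}{f_n(Q,Z)} \;=\; \frac{f_n(Q,Z)}{f_{n-1}(Q,Z)} + f_n(Q,Z).
\]
By \cref{P:f_rec} applied with $k = n-1, n, n+1$, each term $f_k(Q,Z)$ equals $f_{k+1}(q,z)$, so the displayed equation translates into precisely \eqref{eq:q_rec} at $n$.

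I do not expect any real obstacle: once one notices that $\phi$ intertwines the recursion with the index shift, the induction is essentially mechanical. The only small matter to keep in mind is that division is permitted, but $f_n(q,z)$ is a nonzero polynomial for each $n$ (as \cref{P:f_rec} shows inductively from $f_0 = z$), so \eqref{eq:q_rec} may be read either as an identity in the field of rational functions in $(q,z)$ or, after clearing denominators, as the polynomial identity $f_{n+2}\, f_n = f_{n+1}^2\,(1 + f_n)$.
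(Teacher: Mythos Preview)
Your proof is correct and is essentially the same argument as the paper's: the paper packages the induction by defining $R_n := f_{n+2}/f_{n+1} - f_{n+1}/f_n - f_{n+1}$ and observing that $R_{n+1}(q,z) = R_n\bigl(q(1+z),4qz\bigr)$, so $R_0\equiv 0$ forces $R_n\equiv 0$, which is exactly your induction via the substitution $\phi$ in slightly different clothing.
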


\begin{proof}
For $n=0$, \eqref{eq:q_rec} is easily verified (recall that $f_0=z$,
$f_1=4qz$, and $f_2=16q^2z(1+z)$). Now the two-variable rational function
$R_n := f_{n+2}/f_{n+1} - f_{n+1}/f_n - f_{n+1}$ satisfies the same recursion
\eqref{f} as $f$, that is, $R_{n+1}(q,z) = R_n(q(1+z), 4qz)$; this is
immediate simply by substituting from \eqref{f} for each of
$f_{n+3},f_{n+2},f_{n+1}$.  It follows that $R_n$ is identically zero for
every $n$.
\end{proof}

Next, we show how to control the asymptotic behaviour of solutions to the
recursion \eqref{eq:q_rec}.

\begin{Lemma}\label{L:bounded}
  Suppose that $a_0,a_1>0$, and that the sequence $(a_n)$ satisfies the
  recursion \eqref{eq:q_rec}, in other words for every $n$,
  \begin{equation}\label{arec}
    \frac{a_{n+2}}{a_{n+1}} = \frac{a_{n+1}}{a_n} + a_{n+1}.
  \end{equation}
  If $X$ satisfies
  \begin{equation}\label{eq:X}
    \frac{a_k}{a_{k-1}} +  \frac{a_k}{1-X} \leq X < 1,
  \end{equation}
  for some $k$, then for every $n$ we have $a_{n+1}/a_n < X$ and so $a_n <
  a_0 X^n$.
\end{Lemma}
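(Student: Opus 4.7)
The plan is to introduce the ratio sequence $r_n := a_{n+1}/a_n$, which is well-defined and strictly positive: a straightforward induction using \eqref{arec} shows that $a_n > 0$ for every $n$, given $a_0, a_1 > 0$. Rewriting \eqref{arec} in terms of these ratios yields the clean form $r_{n+1} = r_n + a_{n+1}$, so $(r_n)$ is strictly increasing, and telescoping gives
\[
  r_n = r_{k-1} + \sum_{j=k}^n a_j \qquad \text{for } n \geq k-1.
\]
In particular, hypothesis \eqref{eq:X} reads $r_{k-1} \leq X - a_k/(1-X) < X$.

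The heart of the argument is a coupled induction on $n \geq k$ establishing the two statements $a_n \leq a_k X^{n-k}$ and $r_{n-1} < X$ simultaneously. The base case $n = k$ is immediate. For the inductive step, the geometric bound on $a_k, \dots, a_n$ produces $\sum_{j=k}^n a_j < a_k/(1-X)$, which combined with the telescoped expression for $r_n$ and \eqref{eq:X} yields $r_n < X$. Multiplying then gives $a_{n+1} = a_n r_n < a_k X^{n+1-k}$, closing the induction.

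To conclude: monotonicity of $(r_n)$ together with $r_{k-1} < X$ forces $r_m < X$ for every $m \leq k-1$, so in fact $r_n < X$ for all $n \geq 0$. The final estimate $a_n < a_0 X^n$ (for $n \geq 1$) then follows from $a_n = a_0 \prod_{j=0}^{n-1} r_j$.

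The main obstacle is the circularity of the control: bounding the ratio $r_n$ requires a geometric decay estimate on $a_j$ for $j \leq n$, but that decay estimate is itself a consequence of $r_j < X$ for those same $j$. Running the two claims as a single coupled induction untangles this, and explains why the hypothesis on $X$ carries the extra slack $a_k/(1-X)$: this is exactly the bound on the geometric tail $\sum_{j \geq k} a_k X^{j-k}$ that the induction produces.
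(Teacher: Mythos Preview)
Your proof is correct and follows essentially the same approach as the paper's. The only cosmetic difference is that you package the argument as a coupled induction on the pair of statements $a_n \leq a_k X^{n-k}$ and $r_{n-1} < X$, whereas the paper inducts on the single claim $Q_n < X$ (with $Q_n = a_n/a_{n-1} = r_{n-1}$) and derives the geometric bound on $a_i$ inside the inductive step as a consequence of the hypothesis; the underlying logic and the use of the telescoped identity $r_n = r_{k-1} + \sum_{j=k}^n a_j$ are identical.
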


\begin{proof}
Write $Q_n=a_n / a_{n-1}$. Observe that $a_n>0$ for all $n$, by induction.
Therefore $Q_n$ is increasing in $n$. We need to prove that $Q_n<X$ for all
$n\geq 1$. By \eqref{eq:X} we have
$$Q_k < Q_k + \frac{a_k}{1-X} \leq X,$$
so the required inequality holds for $n=k$, and hence also for all $n<k$
since $Q_n$ is increasing.  For $n>k$ we use induction.  Suppose that $Q_n<X$
for all $n\leq m$, where $m\geq k$.   By repeated application of
\eqref{arec},
$$Q_{m+1} = Q_k + \sum_{i=k}^m a_i.$$
On the other hand the inductive hypothesis gives $a_i\leq a_k X^{i-k}$ for
$k\leq i\leq m$, so substituting into the last equation and using
\eqref{eq:X} gives
\[Q_{m+1} < Q_k +\frac{a_k}{1-X} \leq X.\qedhere \]
\end{proof}

\begin{Cor}\label{P:general}
  Let $0<q<1$, and define a sequence $(a_n)_{n\geq 0}$ by $a_0=q$,
  $a_1=4q^2$, and
  \[
  \frac{a_{n+2}}{a_{n+1}} = \frac{a_{n+1}}{a_n} + a_{n+1},\qquad n\geq 0
  \]
  If for some $k$ we have $a_k<a_{k-1}$ and
  \begin{equation}\label{ak}
    \Bigl(1-\frac{a_k}{a_{k-1}}\Bigr)^2 \geq 4 a_k,
  \end{equation}
  then for every $n$,
  \begin{equation} \label{Tgeneral}
    f_n(q,q) \leq \Bigl(\frac{1+a_k/a_{k-1}}2\Bigr)^n.
  \end{equation}
\end{Cor}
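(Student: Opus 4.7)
The plan is to derive the corollary directly from Lemma \ref{L:bounded} by choosing the right value of $X$. First I would verify that the sequence $(a_n)$ in the statement coincides with $f_n(q,q)$: we have $f_0(q,q)=q=a_0$ and $f_1(q,q)=4q^2=a_1$, and by Corollary \ref{L:q_rec} the sequence $n\mapsto f_n(q,q)$ satisfies the recursion \eqref{arec}. Since \eqref{arec} together with its two initial values determines the sequence uniquely, $a_n=f_n(q,q)$ for all $n\geq 0$.

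Next I would set
\[
X := \tfrac{1}{2}\Bigl(1 + \frac{a_k}{a_{k-1}}\Bigr),
\]
and check that this $X$ fulfils the hypothesis \eqref{eq:X} of Lemma \ref{L:bounded}. Writing $r=a_k/a_{k-1}$, the assumption $a_k<a_{k-1}$ gives $r\in(0,1)$, hence $X=(1+r)/2\in(0,1)$ and $1-X=(1-r)/2>0$. The inequality $\tfrac{a_k}{a_{k-1}}+\tfrac{a_k}{1-X}\leq X$ then becomes $r+\tfrac{2a_k}{1-r}\leq\tfrac{1+r}{2}$, which, after clearing denominators, is equivalent to $4a_k\leq(1-r)^2$, i.e.\ to the hypothesis \eqref{ak}.

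With \eqref{eq:X} verified, Lemma \ref{L:bounded} yields $a_n<a_0 X^n = qX^n\leq X^n$ for every $n$ (using $0<q<1$). Substituting $a_n=f_n(q,q)$ gives the desired bound \eqref{Tgeneral}. There is no real obstacle in this corollary; the entire content is that the natural choice $X=(1+r)/2$ turns the slightly opaque condition \eqref{eq:X} of the lemma into precisely the cleaner sufficient condition \eqref{ak} stated here, and this is a short algebraic identity.
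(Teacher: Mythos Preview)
Your proof is correct and follows essentially the same route as the paper's: identify $a_n$ with $f_n(q,q)$ via \cref{L:q_rec}, set $X=(1+a_k/a_{k-1})/2$, check that \eqref{ak} is exactly the condition making this $X$ satisfy \eqref{eq:X}, and invoke \cref{L:bounded} together with $a_0=q<1$. You spell out the identification $a_n=f_n(q,q)$ and the algebra behind the choice of $X$ more explicitly than the paper does, but the argument is the same.
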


\begin{proof}
It is easy to verify that an $X$ satisfying the quadratic inequality
\eqref{eq:X} exists exactly when \eqref{ak} holds, in which case $X =
(1+a_k/a_{k-1})/2$ is a solution. Since $a_0=q<1$, \cref{L:bounded} implies
the result.
\end{proof}

\begin{proof}[Proof of \cref{T:7/8}]
We combine Corollary~\ref{P:general} with Proposition \ref{P:Tbound} for
suitable values of $k$ and $q:=1-p$.  With $q=1/8$ we get $a_0 = 1/8$ and
$a_1=1/16$. Therefore \eqref{ak} holds with equality for $k=1$, and
\eqref{Tgeneral} gives the first claim of \cref{T:7/8}.

Using arithmetic with rational numbers to avoid numerical errors, we have
verified that for $q=1/7$, \eqref{ak} is satisfied for $k=16$, and
$(1+a_{16}/a_{15})/2$ is slightly smaller than $16/17$. This establishes the
second claim of Theorem~\ref{T:7/8}, but the calculation involves integers
with more than $50000$ digits. Similarly, using interval arithmetic to avoid
errors gives the third claim.
\end{proof}

For the above values of $q$, the bounds on $T_n(p)$ are not the best that can
be obtained from our analysis.  Using larger $k$ and the optimal bound from
Lemma~\ref{L:bounded} yields slightly better exponential decay. For example,
this gives $T_n(7/8)\geq 1-0.655^n$. We believe that the correct exponential
decay is even faster, and that $p_c$ is strictly smaller than our bound.

\section*{Open problems}
\begin{enumerate}
\item[(i)] Does the tiling probability $T_n(p_c)$ {\em at} the critical
    point have a limit as $n\to\infty$?  If so, what is it?  Is it $1$?
    (As remarked in \cref{prelim}, such a limit must be at least $(\surd
    5-1)/2$.)
\item[(ii)] Is there a phase transition in dimensions $d\geq 4$?  I.e.,
    is $p_c(d)$ strictly positive?
\item[(iii)] Is there a unique critical point in dimension $3$?  I.e.,
    does the tiling probability $T_n^{(3)}(p)$ tend to $0$ or $1$ as
    $n\to\infty$ according as $p<p_c(3)$ or $p>p_c(3)$.  (As discussed in
    \cref{prelim,sec:sharp}, we know that there is a sharp transition for
    each $n$ in the sense that $T_n^{(3)}(p)$ increases from $\epsilon$
    to $1-\epsilon$ over an interval of length $o(1)$ as $n\to\infty$,
    and that the location of this interval is bounded strictly away from
    $0$ and $1$.  The question is whether the location converges or
    oscillates).
\end{enumerate}

\section*{Acknowledgements}
\enlargethispage*{1cm}
 This work arose from a meeting at the Banff
International Research Station (BIRS), Alberta, Canada.  We are grateful for
the use for this outstanding resource. We thank James Martin, Jim Propp, Dan
Romik and David Wilson for many valuable conversations.  We thank the referee
for helpful comments. Supported in part by NSERC (OA), the Israel Science
Foundation (GK), and NSF grant \#0901475 (PW).

\bibliographystyle{abbrv}
\bibliography{dyadic}

\end{document}